\documentclass{amsart}
\usepackage{amssymb}
\usepackage[hyphens]{url}
\usepackage{amsmath}
\usepackage{amsthm}
\usepackage{thmtools}
\usepackage{mathrsfs}
\usepackage[utf8]{inputenc}
\usepackage[english]{babel}
\usepackage{cite}
\usepackage{hyperref}
\usepackage{cleveref}
\usepackage[section]{placeins}
\RequirePackage{textgreek}
\usepackage{enumitem}
\usepackage{bm}
\usepackage{nicefrac}
\usepackage{mathabx}
\usepackage[all,cmtip]{xy}
\usepackage{caption}
\usepackage{float}
\usepackage{mathtools}
\usepackage{xcolor}
\usepackage{tikz}
\title{Classification of Fuchsian groups with torsion}

\hypersetup{colorlinks=true, linkcolor=blue, citecolor=cyan, urlcolor=red}
\hypersetup{linktoc=all}
\numberwithin{equation}{subsection}
\newtheorem{thm}{Theorem}[section]
\newtheorem{lem}[thm]{Lemma}
\newtheorem{prop}[thm]{Proposition}
\newtheorem{cor}[thm]{Corollary}

\newtheorem{defn}[thm]{Definition}

\newtheorem{exm}[thm]{Example}

\newtheorem{que}[thm]{Question}
\newtheorem{rem}[thm]{Remark}

\newtheorem{fct}[thm]{Fact}

\DeclareMathOperator{\fg}{fg}

\DeclareMathOperator{\R}{\mathbb{R}}

\DeclareMathOperator{\Comm}{Comm}

\DeclareMathOperator{\PSL}{PSL}

\DeclareMathOperator{\supp}{supp}
\DeclareMathOperator{\Inc}{Inc}

\subjclass[2020]{Primary 03E15, Secondary 22D40}

\author{George Peterzil}
\address{Einstein Institute of Mathematics, The Hebrew University, Israel}
\email{george.peterzil@mail.huji.ac.il}

\thanks{The research was supported by the ISF (grant No. 957/25).}

\begin{document}

\begin{abstract}
     In their recent paper, Bergfalk and Smythe prove that the isometry equivalence relation on hyperbolic surfaces with finitely-generated fundamental group is concretely classifiable, and ask whether the same result holds true for 2-dimensional hyperbolic orbifolds, or equivalently, whether the action of $\PSL_2(\R)$ on its space of finitely-generated discrete subgroups is concretely classifiable. In this note we answer this question in the affirmative. We then use the result to prove that a nonsingular ergodic $\PSL_2(\R)$-space with nonelementary finitely-generated stabilizers is homogeneous, in similarity with a result of Stuck-Zimmer for lattices in semisimple lie groups. The main ingredients of our proof are Selberg's lemma and a result of Greenberg on commensurators.
\end{abstract}

\maketitle

\section{Introduction}

Given a linear group $H\leq \text{GL}_n(\mathbb{C})$ we write $\mathcal{S}(H)$ for the collection of closed subgroups of $H$. We equip $\mathcal{S}(H)$ with the Chabauty topology. This is a compact metrizable space, and in particular it is Polish. We write $\mathcal{S}_d(H)\subseteq \mathcal{S}(H)$ for the subspace consisting of discrete subgroups of $H$. The group $H$ acts on $\mathcal{S}(H)$ continuously by conjugation, such that $\mathcal{S}_d(H)$ is an invariant set. This space was introduced in \cite{Chabauty} and has since been widely used in various contexts. Descriptive set-theoretically, it is natural to ask for the complexity of the associated orbit equivalence relation, in general and when restricted to certain subspaces. 

In \cite{ManifoldClassification}, Bergfalk and Smythe provide a systematic framework for tackling manifold classification problems descriptive set-theoretically. In particular, they consider the problem of classification of hyperbolic surfaces up to isometry, or equivalently, the problem of classification of Fuchsian groups up to conjugacy. Using Teichm\"{u}ller theory, they prove the following theorem.

\begin{thm}[Theorem J in\cite{ManifoldClassification}]
\label{Fact: no torsion smooth}
    The action by conjugation of $\PSL_2(\R)$ on the space of torsion-free, finitely-generated Fuchsian groups is concretely classifiable \footnote{We follow their notational lead by using the term \emph{concretely classifiable} rather than the term \emph{smooth} in order to avoid confusion.}.
\end{thm}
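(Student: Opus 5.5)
To prove that the conjugation action of $\PSL_2(\R)$ on the space $X$ of torsion-free finitely-generated Fuchsian groups is concretely classifiable, I will use the standard criterion for Polish group actions (Glimm--Effros). The orbit equivalence relation is smooth if and only if every orbit is locally closed. Equivalently, it suffices to exhibit a Borel map from $X$ to a standard Borel space that is a complete invariant for conjugacy. Since $X$ need not be Polish a priori, I first check that it is a Borel subset of $\mathcal{S}_d(\PSL_2(\R))$. Finite generation and torsion-freeness are both Borel conditions in the Chabauty topology, which gives this.

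\textbf{Reduction to finitely many invariants.} The plan is to partition $X$ into countably many invariant Borel pieces $X_{g,n,k}$ according to the topological type of $\mathbb{H}^2/\Gamma$. Here $g$ is the genus, $n$ the number of cusps, and $k$ the number of funnels, i.e.\ the boundary components of the convex core. These invariants can be read off in a Borel way from a presentation of $\Gamma$: one enumerates finite generating tuples from a countable dense family and records the parabolic and hyperbolic-peripheral elements. It then suffices to show smoothness on each piece. On $X_{g,n,k}$, fix a marking, that is, an isomorphism from a fixed surface group $\pi=\pi_1(S_{g,n,k})$ carrying the peripheral structure. The space of discrete faithful type-preserving representations modulo conjugation is then the Teichm\"uller space $\mathcal{T}$, extended with funnel boundary lengths. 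This is a finite-dimensional cell, and the mapping class group $\mathrm{Mod}$ acts on it properly discontinuously. Conjugacy classes in $X_{g,n,k}$ correspond bijectively to points of the moduli space $\mathcal{T}/\mathrm{Mod}$. That quotient is Hausdorff and second countable because the action is proper, so it is a standard Borel space.

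\textbf{Borel selection of markings.} The remaining task is to make the map $\Gamma\mapsto[\Gamma]\in\mathcal{T}/\mathrm{Mod}$ Borel. I would choose a marking Borel-ly, for instance by taking the lexicographically first tuple in a fixed enumeration of a countable dense subset of $\PSL_2(\R)$ approximations that generates $\Gamma$ and satisfies the relations of $\pi$ with the correct peripheral types. Here I would use the Kuratowski--Ryll-Nardzewski selection theorem applied to the closed set of generating tuples in $\PSL_2(\R)^{N}$. Composing the resulting representation with the projection to $\mathcal{T}/\mathrm{Mod}$ gives the invariant. Since different markings differ by an element of $\mathrm{Aut}(\pi)$ preserving peripheral structure, which acts through $\mathrm{Mod}$ (Dehn--Nielsen--Baer), the map is well defined and is a complete invariant.

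\textbf{Main obstacle.} I expect the hardest step to be the Borel selection of a marking, together with the verification that the selected tuple yields a representation landing in the right Teichm\"uller component. The difficulty lies in handling the peripheral structure (cusps versus funnels) measurably. If this proves awkward, the fallback is to verify directly that each orbit is locally closed. I would use the fact that algebraic convergence of discrete faithful representations of a fixed type, modulo conjugation, is equivalent to convergence in $\mathcal{T}$. Properness of the $\mathrm{Mod}$-action then shows that the orbit of $\Gamma$ is closed in $X_{g,n,k}$, and Glimm--Effros gives smoothness.
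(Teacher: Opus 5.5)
The paper gives no proof of this statement; it quotes it from Bergfalk--Smythe, whose argument is Teichm\"uller-theoretic. Your main route, a complete invariant with values in $\mathcal{T}/\mathrm{Mod}$, is therefore the expected one. The Teichm\"uller inputs you list are classical, though. The real content of the theorem is that $\Gamma\mapsto[\Gamma]$ is Borel, and that is exactly the step you leave open.

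The mechanism you sketch for it fails as written. A fixed countable dense subset of $G$ typically meets a discrete $\Gamma$ only in the identity. Approximations of generators neither generate $\Gamma$ nor satisfy the relations of $\pi$, so there is no ``first admissible tuple'' in such an enumeration. What you need is a Borel enumeration of $\Gamma$ itself. Kuratowski--Ryll-Nardzewski gives Borel maps $d_m:\mathcal{S}(G)\to G$ with $\{d_m(\Gamma)\}_m$ dense in, hence equal to, each discrete $\Gamma$. You then take the least index tuple $(n_1,\dots,n_N)$ such that $(d_{n_1}(\Gamma),\dots,d_{n_N}(\Gamma))$ generates $\Gamma$ and has the prescribed peripheral types. ``Generates'' is a countable Boolean combination of equalities between Borel functions, so this is Borel.

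The peripheral types still need an actual Borel criterion. Parabolicity is a Borel condition on elements. A primitive hyperbolic $\gamma\in\Gamma$ is funnel-peripheral iff one of the two arcs of $\partial\mathbb{H}^2$ cut out by its fixed points misses the limit set. This is Borel because $\Gamma\mapsto L(\Gamma)$ is Borel (Theorem 8.20 of \cite{ManifoldClassification}). Counting $\Gamma$-conjugacy classes of such elements then computes $(g,n,k)$ Borel-ly.

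Finally, the elementary torsion-free groups (trivial, parabolic cyclic, hyperbolic cyclic) are not covered by your $(g,n,k)$ picture, since their convex cores are empty or degenerate. They must be treated separately. This is easy: translation length is a complete invariant for the hyperbolic ones, and the other two types are single orbits.

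Your fallback, done properly, is the more economical route, but as stated it conflates two topologies. Chabauty convergence $\Gamma^{h_i}\to\Gamma'$ is geometric convergence, and extracting algebraically convergent markings from it is precisely the step you skip. Moreover, Glimm--Effros needs a Polish $G$-space, not merely the Borel set $X_{g,n,k}$.

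A direct argument avoids markings altogether. Fix a basepoint $o\in\mathbb{H}^2$ and suppose $\Gamma^{h_i}\to\Delta$.
\begin{enumerate}
\item If the images of $h_i^{-1}o$ in $\Gamma\backslash\mathbb{H}^2$ stay in a compact set, replace $h_i$ by $h_i\gamma_i^{-1}$ with $\gamma_i\in\Gamma$ to make the $h_i$ bounded. Passing to a subsequence, $\Delta$ is then a conjugate of $\Gamma$.
\item Otherwise, since finitely-generated Fuchsian groups are geometrically finite, these points escape into a funnel or a cusp. In a funnel, every nontrivial element of $\Gamma$ has displacement at $h_i^{-1}o$ tending to infinity, so $\Delta=\{1\}$.
\item In a cusp, only elements of the cyclic cusp stabilizer keep bounded displacement. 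So $\Delta$ is a limit of conjugates of an infinite cyclic group, hence abelian.
\end{enumerate}
Thus the orbit of a nonelementary $\Gamma$ is its closure minus the closed set of abelian subgroups, so it is locally closed in $\mathcal{S}(G)$. After a Becker--Kechris change of topology making the invariant Borel set a Polish $G$-space, Glimm's theorem gives concrete classifiability. This needs no Teichm\"uller theory or mapping class groups, and it never uses torsion-freeness.
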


 Somewhat surprisingly, the analogous statement for $\PSL_2(\mathbb{C})$ is proven false in the same paper (Theorem C). They also demonstrate that classification of discrete subgroups which are not necessarily finitely-generated is impossible in any noncompact semisimple lie group (Theorem I), following the unclassifiability of subgroups of the free group $F_2$ proved by Thomas and Velickovic \cite{ThomVeli} and of groups containing $F_2$ by Andretta, Camerlo and Hjorth \cite{AndrCameHjor}.

It is then asked whether the same is true without the assumption that the groups are torsion-free. We answer this question in the affirmative.

\begin{thm}
\label{Theorem: Main Theorem}
    The action by conjugation of $\PSL_2(\R)$ on the space of finitely-generated Fuchsian groups is concretely classifiable.
\end{thm}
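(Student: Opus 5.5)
We reduce the torsion case to Theorem \ref{Fact: no torsion smooth} via Selberg's lemma, and use the finiteness given by Greenberg's commensurator result to control the reduction. Write $X$ for the space of finitely-generated Fuchsian groups and $X_0\subseteq X$ for the torsion-free ones. By Theorem \ref{Fact: no torsion smooth} there is a Borel map $c\colon X_0\to Y$ into a Polish space $Y$ with $\Gamma\sim\Gamma'$ iff $c(\Gamma)=c(\Gamma')$. The plan is to assign to each $\Gamma\in X$ a \emph{canonical countable set} of torsion-free finite-index subgroups, and to classify $\Gamma$ by the countable set of $c$-invariants of its finite-index torsion-free subgroups, together with enough data to recover $\Gamma$ from one of them.

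\textbf{Step 1 (Borel choice of subgroups).} A finitely-generated group has only countably many finite-index subgroups, and these can be enumerated in a Borel way from a Borel enumeration of generators of $\Gamma$. Indeed, $\Gamma\mapsto$ a sequence of generators is Borel on $\mathcal{S}_d$ by Kuratowski--Ryll-Nardzewski selection, and finite-index subgroups correspond to transitive actions on finite sets. Let $T(\Gamma)$ be the set of torsion-free finite-index subgroups, which is nonempty by Selberg's lemma. Each $\Delta\in T(\Gamma)$ is finitely generated, so it lies in $X_0$. Set $\Phi(\Gamma)=\{c(\Delta):\Delta\in T(\Gamma)\}$. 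This is a countable subset of $Y$, conjugation invariant, and depends Borel-ly on $\Gamma$. However, it is \emph{not} obviously a complete invariant, and countable sets are only classifiable up to $=^+$, not concretely. So we refine it.

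\textbf{Step 2 (recovering $\Gamma$ from $\Delta$).} If $\Gamma$ is nonelementary, then $\Delta\in T(\Gamma)$ is a nonelementary finitely-generated Fuchsian group. By Greenberg's theorem, either $\Delta$ is arithmetic-free, in which case its commensurator $\Comm(\Delta)$ is discrete and contains $\Delta$ with finite index, or $\Delta$ is a lattice with finite covolume. In the non-arithmetic/infinite-covolume case, every $\Gamma\supseteq\Delta$ with finite index lies in the discrete group $\Comm(\Delta)$, so only finitely many such $\Gamma$ exist. Moreover, the $\Gamma$ containing a given $\Delta$ are all contained in the normalizer-type object $N(\Delta)$. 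For uniformity I would instead use the fact that $\Gamma$ normalizes a characteristic torsion-free subgroup: take $\Delta_\Gamma$ to be the intersection of all subgroups of $\Gamma$ of index $\le n(\Gamma)$, where $n(\Gamma)$ is the least $n$ making this intersection torsion-free. This is canonical, normal in $\Gamma$, Borel in $\Gamma$, and conjugation-equivariant. Then $\Gamma\le N_{\PSL_2(\R)}(\Delta_\Gamma)$, and for nonelementary $\Delta_\Gamma$ this normalizer is discrete and contains $\Delta_\Gamma$ with finite index, by the Greenberg-type finiteness. So $\Gamma$ is one of finitely many subgroups of $N(\Delta_\Gamma)/\Delta_\Gamma$.

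\textbf{Step 3 (invariant).} Using the concrete classification of $X_0$, fix a Borel transversal-type selector $s$ for conjugacy classes in $X_0$; smoothness of a Polish group action gives a Borel reduction, and in fact we can pick $g_\Gamma$ with $g_\Gamma\Delta_\Gamma g_\Gamma^{-1}$ a canonical representative $R$. We obtain this by measurable selection, since orbits are closed in the smooth case and stabilizers are compact-by-discrete. The stabilizer ambiguity is the normalizer $N(R)$, which is discrete. The invariant is then the pair $(c(\Delta_\Gamma),[g_\Gamma\Gamma g_\Gamma^{-1}]_{N(R)})$, where the second coordinate is a finite-index overgroup of $R$ inside the finite group $N(R)/R$, up to $N(R)$-conjugacy. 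This is a finite set of choices and can be encoded canonically by a lexicographically least representative among finitely many options, so it is concretely classifiable. Elementary groups are handled separately: they are cyclic, or finite, or dihedral-type, and are visibly concretely classifiable, for instance by trace/translation-length data.

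The main obstacle is Step 3: we must make the selection of $g_\Gamma$ and the finite-ambiguity quotient genuinely Borel, and we must verify that the normalizer of a nonelementary finitely-generated Fuchsian group is discrete with finite index. I would try Greenberg's result for that second point first. If it fails, I would fall back on the general fact that a smooth equivalence relation extended by a finite-to-one Borel map, as in Step 2, remains smooth.
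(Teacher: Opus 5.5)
\textbf{Verdict: your argument is correct in outline, but it takes a genuinely different route from the paper.}

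\textbf{How the paper argues.} It splits $\mathcal{S}_{\fg}(G)$ into three parts:
\begin{itemize}
\item elementary groups, handled by a direct transversal;
\item lattices, handled by Stuck--Zimmer's classification of lattices;
\item nonelementary nonlattices, handled by passing through the commensurator.
\end{itemize}
For the third part, Greenberg's theorem $[\Comm_G(\Gamma):\Gamma]<\infty$ makes $\Gamma\mapsto\Comm_G(\Gamma)$ Borel and countable-to-one on conjugacy classes. The paper then takes the characteristic torsion-free subgroup $\Comm_G(\Gamma)_m$. This step is injective on conjugacy classes of commensurators because commensurators are maximal in their commensurability class. The countable-to-smooth lemma finishes the argument.

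\textbf{How your argument differs.} You take a characteristic torsion-free subgroup $\Delta_\Gamma$ of $\Gamma$ itself and exploit $\Gamma\le N_G(\Delta_\Gamma)$. The key input is then $[N_G(\Delta):\Delta]<\infty$ for every nonelementary finitely-generated $\Delta$. This is weaker than Greenberg's conclusion. Unlike commensurator finiteness, which fails for arithmetic lattices, it also holds for lattices. As a result:
\begin{itemize}
\item your argument treats lattices and nonlattices uniformly;
\item it needs no Borelness of the commensurator map;
\item it dispenses with Stuck--Zimmer, since Theorem J already covers torsion-free lattices.
\end{itemize}
It also yields a more general torsion-elimination principle than the SCC framework: it suffices that torsion-free members have finite index in their normalizers. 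The paper's route, in exchange, gives an actual reduction at the level of commensurators, but only under the stronger SCC hypothesis.

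\textbf{Points to repair.}
\begin{itemize}
\item \emph{Normalizer finiteness for lattices.} Greenberg's theorem, as used in the paper, excludes lattices, and your ``arithmetic-free or lattice'' dichotomy misstates it. For a lattice $\Delta$, argue directly. The identity component of the closed group $N_G(\Delta)$ centralizes the discrete group $\Delta$, and a nonelementary Fuchsian group has trivial centralizer. So $N_G(\Delta)$ is discrete, and it contains the lattice $\Delta$ with finite index by covolume. For nonlattices, use $N_G(\Delta)\le\Comm_G(\Delta)$ together with Greenberg.
\item \emph{Closed orbits in Step 3.} Orbits need not be closed: for smooth locally compact actions they are only locally closed. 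You do not need closedness anyway. The set $\{(\Gamma,g): g\Delta_\Gamma g^{-1}\in T\}$ is Borel, and its sections are cosets of the countable group $N_G(R)$, so Luzin--Novikov gives a Borel choice of $g_\Gamma$.
\item \emph{The least-representative choice.} Conjugacy restricted to $\{\Lambda:\Delta_\Lambda\in T\}$ has finite classes, so choosing a least representative is Borel.
\item \emph{A shorter finish.} Your fallback is exactly the paper's countable-to-smooth lemma. The map $\Gamma\mapsto\Delta_\Gamma$ is Borel, equivariant, finite-to-one, and finite-to-one on conjugacy classes. So the lemma applies directly, and Step 1 and the selection machinery of Step 3 become unnecessary.
\end{itemize}
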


\subsection*{Proof outline}

We use a result of Stuck--Zimmer on the classifiability of lattices to reduce the problem to the nonlattice case, where for elementary groups we construct a Borel transversal directly. For the nonelementary, nonlattice case, using Selberg's lemma \cite{Selberg}, we associate to each group $\Gamma$ the intersection $\Gamma_m$ of all of its subgroups of index $n_\Gamma$, where $n_\Gamma$ is the minimal index of a torsion-free subgroup of $\Gamma$, and using it we assign to a finitely-generated nonelementary Fuchsian group $\Gamma$ the group $\Comm_G(\Gamma)_m$ (where $\Comm_G(\Gamma)$ is the commensurator of $\Gamma$ in $\PSL_2(\R)$), which we prove induces a countable-to-one map between the conjugacy classes with torsion and those without torsion using a theorem of Greenberg on commensurators. The result of Bergfalk-Smythe tells us that the latter is concretely classifiable, and the theorem then follows from a descriptive set-theoretic argument.

The methods of Bergfalk and Smythe point at a natural reason why classifying torsion-free subgroups could be simpler; the problem of classifying \emph{manifolds} is apriori more accessible than the problem of classifying \emph{orbifolds}. Thus, we prove the reduction to the torsion-free case for certain classes of discrete subgroups of an arbitrary linear group (see \Cref{Definition: SCC}), hoping that it would prove useful.

\subsection*{A Stuck-Zimmer type corollary}

In their seminal paper \cite{StuckZimmer}, Stuck and Zimmer prove that the action of a lie group on its space of lattices is concretely classifiable. Using it, they furthermore prove:

\begin{thm}[Lemma 3.5 in\cite{StuckZimmer}]
\label{Fact: Stuck-Zimmer}
    Let $H$ be a semisimple lie group with no compact factors, and let $(X,\mu)$ be a nonsingular ergodic $H$-space such that for $\mu$-almost every $x\in X$ we have that the stabilizer $H_x$ is a lattice. Then there is a measure-class preserving, $H$-equivariant isomorphism $(X,\mu)\cong (H/\Gamma,\nu_\Gamma)$ for some lattice $\Gamma\leq H$.
\end{thm}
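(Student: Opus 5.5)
The plan is to obtain the statement from the concrete classifiability of the conjugation action of $H$ on its space of lattices, which is the main result of \cite{StuckZimmer}, combined with ergodicity. Write $\mathcal{L}(H)\subseteq \mathcal{S}(H)$ for the invariant Borel set of lattices. By hypothesis, the stabilizer map $x\mapsto H_x$ is defined $\mu$-almost everywhere and takes values in $\mathcal{L}(H)$. It is Borel, by the standard fact that stabilizer maps of Borel actions of lcsc groups into $\mathcal{S}(H)$ are Borel; after discarding an invariant null set we may assume it is everywhere defined. This map is $H$-equivariant, since $H_{hx}=hH_xh^{-1}$. Composing it with a Borel reduction of the conjugacy relation on $\mathcal{L}(H)$ to equality on a standard Borel space $Y$ gives an $H$-invariant Borel map $f\colon X\to Y$.

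By ergodicity of $\mu$, the invariant Borel map $f$ is $\mu$-a.e.\ constant: preimages of a countable separating family of Borel sets in $Y$ are invariant, hence null or conull. So there is a lattice $\Gamma$ such that $H_x$ is conjugate to $\Gamma$ for almost every $x$. Let $X_0=\{x: H_x=\Gamma\}$; it is Borel and meets almost every orbit. The orbit of each $x\in X_0$ is identified with $H/N_H(\Gamma)$ via $h\mapsto hx$.

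Next we use semisimplicity with no compact factors. Borel density gives that $N_H(\Gamma)/\Gamma$ is finite, or more precisely that $\Gamma$ has finite index in $N_H(\Gamma)$. Since $\Gamma$ is a lattice, $H/\Gamma$ carries a finite invariant measure, and each orbit $Hx$ is Borel isomorphic to $H/H_x=H/\Gamma'$ for a conjugate lattice $\Gamma'$.

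The main obstacle is showing that $\mu$ is supported on a single orbit, that is, that the orbit equivalence relation, which is now smooth because orbits correspond to points of $X_0/N_H(\Gamma)$, has an ergodic measure concentrated on one orbit. I would argue this as follows. The action restricted to the conull set where stabilizers are conjugate to $\Gamma$ has all orbits locally closed with a Borel transversal. Concretely, $X_0$ meets each orbit in an $N_H(\Gamma)/\Gamma$-orbit, which is finite, so a Borel selector gives a transversal $T$. Then the map $x\mapsto$ (the unique point of $T$ in $Hx$) is an invariant Borel map to a standard Borel space. By ergodicity it is a.e.\ constant, equal to some $t_0$. Hence $\mu$ is concentrated on $Ht_0\cong H/\Gamma$, with $H_{t_0}=\Gamma$ after conjugating.

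Finally, a quasi-invariant measure on a single homogeneous space $H/\Gamma$ is equivalent to the Haar-induced invariant measure $\nu_\Gamma$, by uniqueness of quasi-invariant measure classes on $H/\Gamma$. So the orbit map $h\Gamma\mapsto ht_0$ is the desired measure-class preserving $H$-equivariant isomorphism.
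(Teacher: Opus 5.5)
Your argument is correct. It has the same outer structure as the argument the paper uses, following Stuck--Zimmer, for \Cref{Theorem: Stuck-Zimmer}:
\begin{itemize}
\item concrete classifiability plus ergodicity pins the stabilizers to one conjugacy class $\Gamma^H$;
\item finiteness of $N_H(\Gamma)/\Gamma$ forces a single orbit;
\item uniqueness of the quasi-invariant measure class on $H/\Gamma$ finishes.
\end{itemize}

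\textbf{Where you differ is the middle step.} The paper disintegrates $\mu$ along the stabilizer map $S$. It treats each fiber $S^{-1}(\Gamma^g)$, with its conditional measure, as a nonsingular $N_H(\Gamma^g)/\Gamma^g$-system. It proves these fiber systems are ergodic by spreading an invariant function on one fiber equivariantly over all of $X$. Only then does it use finiteness of $N_H(\Gamma)/\Gamma$ to conclude that each conditional measure sits on one free finite orbit.

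You use finiteness at once. $X_0=S^{-1}(\Gamma)$ is a Borel complete section meeting each orbit in a single finite $N_H(\Gamma)$-orbit, so the orbit relation on $HX_0$ is smooth. You then rerun the Glimm--Effros ergodicity argument on the transversal selector. This avoids the disintegration and its null-set bookkeeping: $g_*\nu_{\Gamma^h}\sim\nu_{\Gamma^{gh}}$ holds only for almost every fiber, and the glued invariant function must be checked to be well defined and measurable. The price is checking that your selector is Borel, which you only assert. It holds for two reasons:
\begin{itemize}
\item On $X_0$ the orbit relation is that of the countable discrete group $N_H(\Gamma)$, so your finite-class transversal $T$ is Borel.
\item A point of $X_0\cap Hx$ can be chosen in a Borel way using a Borel section of $H\to H/N_H(\Gamma)\cong\Gamma^H$.
\end{itemize}

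In exchange, the paper's route isolates the ergodic $N_H(\Gamma)/\Gamma$-system on a fiber. That is the relevant object when $N_H(\Gamma)/\Gamma$ is infinite, and there your transversal argument breaks down because the relation on $X_0$ need not be smooth.

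\textbf{A slip and a stray remark.} For $x\in X_0$ the orbit $Hx$ is identified with $H/\Gamma$, not $H/N_H(\Gamma)$. What $H/N_H(\Gamma)$ parametrizes is the set of stabilizers along the orbit, and $X_0\cap Hx=N_H(\Gamma)x\cong N_H(\Gamma)/\Gamma$. Your remark that orbits are ``locally closed'' has no meaning without a topology on $X$, but you never use it.
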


The paper \cite{StuckZimmer} has initiated the systematic study of probability measures on the space of subgroups, in particular for those invariant under conjugation, labeled Invariant Random Subgroups, or IRS for short (see \cite{lecture} for a short survey). This notion has proved incredibly fruitful (see, for example, \cite{AberGlasVira},\cite{BadeDucheLecu},\cite{7sam}).
In the particular case of $\PSL_2(\R)$, Biringer and Raimbault \cite{RaimBiri} provide a complete classification of the almost-sure topological type of  the quotient of an invariant random subgroup. For more general rank one groups, several interesting theorems and constructions are given in \cite{7sam2}. 

The bulk of the theory described above has been developed for conjugation-invariant probability measures on the space of subgroups, rather than for \emph{quasi}-invariant probability measures (with some exceptions, see \cite{FracGela} for a notable example). It is worth noting that very little is known at the full generality of quasi-invariant random subgroups beyond \Cref{Fact: Stuck-Zimmer} (although this terminology was introduced several years after their paper).

 Following the original proof of Stuck and Zimmer and using \Cref{Theorem: Main Theorem} we prove the theorem below.

\begin{thm}
\label{Theorem: Stuck-Zimmer}
    Let $(Y,\nu)$ be a nonsingular ergodic $\PSL_2(\R)$-space such that almost every stabilizer is finitely-generated and nonelementary. Then $(Y,\nu)$ is isomorphic to a homogeneous space $\PSL_2(\R)/\Gamma$, equipped with its canonical invariant measure class.
\end{thm}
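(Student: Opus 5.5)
The plan follows the Stuck--Zimmer argument, with \Cref{Theorem: Main Theorem} taking the place of their smoothness result for lattices. Write $G=\PSL_2(\R)$. Consider the stabilizer map $\sigma\colon Y\to\mathcal{S}(G)$, $y\mapsto G_y$. This map is Borel; for a standard Borel $G$-space this is the usual fact that stabilizer maps are measurable. It is also $G$-equivariant: $G_{gy}=gG_yg^{-1}$. By hypothesis, a conull set of $y$ has $G_y$ finitely generated and nonelementary. Since stabilizers are closed, we next check that almost every $G_y$ is discrete. A closed nondiscrete subgroup of $G$ has nontrivial identity component. The connected closed subgroups of $G$ are trivial, conjugate into the Borel subgroup, or all of $G$. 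Their normalizers are therefore elementary, unless the subgroup is all of $G$. If $G_y=G$ then $y$ is a fixed point, and the fixed point set is invariant, so by ergodicity it is null unless $Y$ is a point. (Whether $G$ itself counts as ``nonelementary'' is excluded in the intended reading.) So $\sigma$ lands almost surely in the invariant Borel set $\mathcal{F}\subseteq\mathcal{S}_d(G)$ of finitely generated nonelementary Fuchsian groups. I will check that $\mathcal{F}$ is Borel, or at least that $\sigma$ can be restricted to an invariant conull Borel set mapping into it.

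Next, push forward: $\sigma_*\nu$ is a quasi-invariant, ergodic measure class on $\mathcal{F}$. By \Cref{Theorem: Main Theorem}, the conjugation action on $\mathcal{F}$ is concretely classifiable, so there is a Borel map $f\colon\mathcal{F}\to Z$ to a standard Borel space whose fibres are exactly the orbits. The function $f\circ\sigma$ is $G$-invariant. By ergodicity it is $\nu$-a.e.\ constant, since an invariant Borel map to a standard Borel space is a.e.\ constant (test it on a countable separating family of Borel sets). Hence almost every stabilizer is conjugate to a single fixed group $\Gamma\in\mathcal{F}$. This step is the heart of the argument and is exactly where classifiability is used.

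Finally, build the isomorphism. Let $Y_0=\{y:G_y\text{ conjugate to }\Gamma\}$, an invariant conull set. For $y\in Y_0$ the orbit $Gy$ is identified with $G/G_y\cong G/\Gamma$. I will show the orbit equivalence relation on $Y_0$ is smooth. Fix a Borel selector $s\colon\mathcal{F}\cap G\cdot\Gamma\to G$ with $s(\Lambda)\Lambda s(\Lambda)^{-1}=\Gamma$; this exists because the orbit map $G/N_G(\Gamma)\to G\cdot\Gamma$ is a continuous bijection from a Polish space, hence Borel with Borel inverse, and $G\to G/N_G(\Gamma)$ has a Borel section. Put $\phi(y)=s(G_y)y$. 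This lands in the set $Y_\Gamma=\{y:G_y=\Gamma\}$, on which $N_G(\Gamma)$ acts. Because $\Gamma$ is nonelementary, $N_G(\Gamma)$ is discrete, so $N_G(\Gamma)/\Gamma$ is countable, and each orbit meets $Y_\Gamma$ in a countable set. Ergodicity of $\nu$ then forces the orbit relation to be carried by a single orbit. The quotient is standard Borel since the relation is smooth (choose one point per countable set Borel-ly via Lusin--Novikov), and an ergodic measure class on a standard Borel space with points as the only invariant sets is a point mass. Thus $\nu$ is concentrated on one orbit $Gy_0\cong G/\Gamma$, and on it the quasi-invariant measure class is the unique $G$-quasi-invariant class, namely the Haar class $\nu_\Gamma$.

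The main obstacle is this last step: passing from ``stabilizers are a.e.\ conjugate to $\Gamma$'' to ``$\nu$ lives on a single orbit.'' The crux is the countability of $N_G(\Gamma)/\Gamma$, which is where nonelementarity enters a second time. The Borel measurability of $\sigma$, $\mathcal{F}$ and $s$ are routine.
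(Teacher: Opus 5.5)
There is a genuine gap in your last step. You claim that because $N_G(\Gamma)/\Gamma$ is countable, the relation induced by $N_G(\Gamma)$ on $Y_\Gamma$ (and hence the orbit relation on $Y_0$) is smooth, by ``choosing one point per countable set Borel-ly via Lusin--Novikov''. Lusin--Novikov does not give this. It lets you enumerate countable sections by Borel functions, but a Borel selector for a countable Borel equivalence relation exists only when the relation is smooth, and that fails in general ($E_0$ is the standard example).

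Countability genuinely cannot suffice here. Take a cocompact surface group $\Lambda\leq G$ and let $\Gamma$ be the kernel of a surjection $\Lambda\to\mathbb{Z}$. Then $\Gamma$ is nonelementary, $N=N_G(\Gamma)$ is discrete, and $N/\Gamma$ is infinite. Induce a free ergodic nonatomic action of $N/\Gamma$ on $Z$ (e.g.\ a Bernoulli shift) to get $G\times_N Z$. This is an ergodic $G$-space in which almost every stabilizer is conjugate to $\Gamma$, yet every $G$-orbit is null. Your final step uses nothing beyond ``stabilizers a.e.\ conjugate to a fixed nonelementary $\Gamma$, and $N_G(\Gamma)$ discrete'', so as written it would prove this example homogeneous. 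Finite generation is only used in your classification step, where it is vacuous for this example.

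The missing ingredient is that $N_G(\Gamma)/\Gamma$ is \emph{finite}, not merely countable. For non-lattices this is Greenberg's theorem, since $N_G(\Gamma)\leq\Comm_G(\Gamma)$ and $[\Comm_G(\Gamma):\Gamma]<\infty$. For lattices, which you treat uniformly while the paper sets them aside via Stuck--Zimmer, it holds because $N_G(\Gamma)$ is discrete and contains the lattice $\Gamma$.

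With finiteness, $N_G(\Gamma)$ induces a finite Borel equivalence relation on $Y_\Gamma$. Such a relation is smooth: take the least element of each class for a Borel linear order. Composing with $y\mapsto s(G_y)y$ then gives an invariant Borel map to a standard Borel space, and your ergodicity argument closes. This is exactly where the paper uses finiteness. There, after disintegrating $\nu$ over the orbit $\Gamma^G$, each fibre is shown to be an ergodic $N_G(\Gamma^g)/\Gamma^g$-system, and transitivity follows because the group is finite. Once the correct input is in place, your selector $s$ is a clean descriptive-set-theoretic substitute for that disintegration.

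A minor point: your list of connected closed subgroups omits the compact ones conjugate to $\mathrm{PSO}(2)$. It is simpler to note that a finitely generated closed subgroup is countable, hence discrete by Baire category.
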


\subsection*{Structure of the paper}

In section $2$ we review some preliminaries and basic results from descriptive set theory and from the theory of commensurators. In section $3$ we introduce the notion of a small commensurator class, in section $4$ we prove \Cref{Theorem: Main Theorem}, and in section $5$ we prove \Cref{Theorem: Stuck-Zimmer}.

\section*{Acknowledgments}

I thank Johanna Steinmeyer, who has tragically passed away as this project was nearing its completion, for her encouragement and support.

I also thank Nachi Avraham-Re'em, Jeffrey Bergfalk, Or Landesberg and Iian Smythe for enriching conversations and correspondences regarding this body of work.

\section{Preliminaries and basic facts}

\subsection*{Notation}

Given an equivalence relation $E$ on a set $X$ and some $X_0\subseteq X$, we write $E\restriction X_0$ for $E\cap X_0\times X_0$. We write $H$ for a general real linear algebraic group and $G$ for $\PSL_2(\R)$. Write $\mathcal{S}_{\fg}(H)\subseteq\mathcal{S}_d(H)$ for the subspace consisting of finitely-generated discrete subgroups of $H$, and $\tilde{\mathcal{S}}_{\fg}(H)\subseteq\mathcal{S}_{\fg}(H)$ for those which are torsion-free. For a subgroup $H_0\leq H$ and $g\in H$ we write $H^g_0=gH_0 g^{-1}$. We write $\mathcal{F}(H)\supseteq \mathcal{S}(H)$ for the space of closed \emph{subsets} of $H$, equipped with the Fell topology (see \cite{Nadler} for more on this topology).

\subsection*{Fundamentals from invariant descriptive set theory}

Let $(X,\beta)$ be a standard Borel space. Recall that an equivalence relation $E$ on $X$ is called \emph{Borel} if $E\in\beta\otimes \beta$ when considered as a set of pairs in $X$.

\begin{defn}
    Given standard Borel spaces $X,Y$ and Borel equivalence relations $F,E$ on $X,Y$ respectively, a \emph{Borel reduction from $E$ to $F$} is a Borel function $f:X\rightarrow Y$ such that for all $x_1,x_2\in X$ we have $(x_1,x_2)\in F$ if and only if $(f(x_1),f(x_2))\in E$. If there exists such a reduction we write $F\preceq E$ and say that \emph{$F$ is reducible to $E$}.
\end{defn}

\begin{defn}
    A Borel equivalence relation $E$ on a standard Borel space $X$ is called \emph{concretely classifiable} if there is a standard Borel space $Y$ such that $E\preceq \mathord{=}_Y$, or equivalently, if the quotient $\sigma$-algebra on $X/E$ induces a standard Borel structure on it.
\end{defn}

We have the following theorems of Kechris.

\begin{thm}[Theorems 7.5.1, 5.4.10 and 5.4.11 in \cite{Gao}]
\label{Fact: smooth OER has transversal}
    Let $H$ be a locally-compact, second-countable group, and suppose $a:H\times X\rightarrow X$ is a Borel action of $H$ on a standard Borel space $X$ with an associated orbit equivalence relation $E$.
    
    \begin{itemize}
        \item There is a countable Borel cross-section for $E$; namely, a Borel subset $X_0\subseteq X$ which intersects each $E$-class on a nonempty countable set.
        \item If, in addition, $E$ is concretely classifiable, then there is a Borel transversal for $E$: a Borel set $T\subseteq X$ which intersects every $E$-class at exactly one point, and it can be taken to be a subset of $X_0$.
    \end{itemize}
\end{thm}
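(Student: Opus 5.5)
The first item should follow by building a Borel ``lacunary'' complete section from local cross-sections, and the second by combining it with the classification map to pick one point per class. For the first item I would begin by invoking the Becker--Kechris theorem, which makes $X$ a Polish $H$-space with the given Borel structure and a continuous action. The key local tool is Miller's lemma: for a neighbourhood $V$ of the identity and a suitable compact neighbourhood $K$, there are Borel ``$V$-lacunary'' sets, i.e.\ sets $S$ with $VS\cap S$ controlled so that each orbit meets $S$ in a discrete set. Concretely, fix a countable dense set $\{x_n\}\subseteq X$ and a compact symmetric neighbourhood $U$ of $1_H$. Using the Effros-type continuity of $g\mapsto g\cdot x$, I would produce, for each $n$, a Borel set $S_n$ meeting every orbit passing near $x_n$ and such that $U\cdot y\cap S_n=\{y\}$ for all $y\in S_n$. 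This is the usual construction: take a small closed set around $x_n$ that is ``transverse'' to the action. I would then enlarge by countably many translates $g_k S_n$ with $\{g_k\}$ dense in $H$, and take the union $X_0=\bigcup_{n,k}g_kS_n$. This set meets every orbit, since each orbit passes near some $x_n$. Each orbit meets each lacunary $S_n$ in a countable set, because a $U$-discrete subset of a $\sigma$-compact orbit (the orbit being a continuous image of $H$) is countable. Hence $X_0$ is a countable complete section, and it is Borel as a countable union of Borel sets.

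For the second item, I would pass to the countable Borel equivalence relation $E_0=E\restriction X_0$. It is Borel, and since its classes are countable, Feldman--Moore applies and realizes it by a Borel action of a countable group. If $E$ is concretely classifiable via a reduction $f:X\to Y$, then $f\restriction X_0$ reduces $E_0$ to $=_Y$. For countable Borel equivalence relations, smoothness is equivalent to the existence of a Borel transversal: fix a Borel linear order on $X_0$ coming from a Borel injection into $\mathbb{R}$, and from each class select the point $x$ minimizing the pair $(\text{index in a Feldman--Moore enumeration},\ldots)$. More simply, I would use Lusin--Novikov uniformization on the Borel set $\{(y,x):x\in X_0,\ f(x)=y\}$, whose sections are countable, to obtain a Borel selector $s$ on $f(X_0)$. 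The set $T=s(f(X_0))=\{x\in X_0: s(f(x))=x\}$ is then Borel, lies in $X_0$, and meets each $E$-class exactly once, because $X_0$ meets every class.

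The main obstacle is the construction of the lacunary sections $S_n$, specifically guaranteeing that they are Borel while still hitting every orbit. I would follow Kechris's argument using the compactness of $U$ together with a Borel choice of closed ``slices''. This is the step where local compactness of $H$ is genuinely used. The passage from a countable section to a transversal is routine by comparison.
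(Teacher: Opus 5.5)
Your second bullet is correct and is the standard argument; the paper gives no proof of this statement, only the citation to Gao. Since $f\restriction X_0$ has countable fibres, Lusin--Novikov gives a Borel uniformization $s$ on the Borel set $f(X_0)$, and $T=\{x\in X_0: s(f(x))=x\}$ is a Borel transversal contained in $X_0$. Feldman--Moore is not needed.

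\textbf{The gap is in the first bullet}, which is the real content (Kechris's countable section theorem). You never construct the sets $S_n$. Invoking a lemma that directly produces Borel lacunary sets meeting all nearby orbits is essentially invoking the theorem. The mechanism you suggest, a small closed set around $x_n$ transverse to the action, is not available in this generality. A lacunary set cannot contain a neighbourhood of one of its points $y$ unless $H_y$ is open: if $Wy\subseteq S$ for a neighbourhood $W\subseteq V$ of $1$, lacunarity forces $Wy=\{y\}$. So such a set must be genuinely thin. For an arbitrary continuous action of an lcsc group on a Polish space there is no slice theorem supplying closed transversals through all nearby orbits: there is no manifold structure, stabilizers vary and may be noncompact, and there may be fixed points. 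Moreover, lacunarity is a condition along entire orbits, which a construction local to $x_n$ does not control. Two smaller points: the translates $g_kS_n$ are superfluous, and ``near $x_n$'' must be allowed to depend on the orbit.

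\textbf{The missing idea} is to define $S_n$ intrinsically, as the points that are local minima, along their own orbit, of the distance to $x_n$. Fix a compatible metric $d\le 1$ and a compact symmetric neighbourhood $V$ of $1$. For nonempty compact $K\subseteq X$, let $m_n(K)$ be the unique point of $K$ at which $z\mapsto (d(z,x_n),d(z,x_0),d(z,x_1),\dots)$ is lexicographically least. These functions separate points, so $m_n$ is well defined, and it is Borel on $K(X)$. Put $S_n=\{y: m_n(Vy)=y\}$.
\begin{itemize}
\item \emph{Borel:} $y\mapsto Vy$ is continuous into $K(X)$, which is where compactness of $V$ enters.
\item \emph{$V$-lacunary:} if $y,z\in S_n$ and $z\in Vy$, then $y\in Vz$, so each is lexicographically below the other, forcing $y=z$.
\item \emph{Complete:} given $x$, let $D=V^2x\setminus(\mathrm{int}\,V)x$. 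This set is compact, because $(\mathrm{int}\,V)x$ is relatively open in $V^2x$: its preimage under the closed surjection $V^2\to V^2x$, $w\mapsto wx$, is $V^2\cap(\mathrm{int}\,V)H_x$. Also $x\notin D$. Choose $n$ with $3d(x,x_n)<d(x,D)$ (any $n$ if $D=\emptyset$). Then $y^*=m_n(V^2x)$ satisfies $d(y^*,x)\le 2d(x,x_n)<d(x,D)$, so $y^*\in(\mathrm{int}\,V)x$. Hence $Vy^*\subseteq V^2x$, so $m_n(Vy^*)=y^*$, i.e.\ $y^*\in S_n$.
\end{itemize}
Your countability argument then finishes the first bullet with $X_0=\bigcup_n S_n$: cover $H$ by countably many right translates $Wh_i$ with $WW^{-1}\subseteq V$; each $Wh_ix$ contains at most one point of $S_n$.
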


The following is classical.

\begin{thm}[Luzin-Novikov uniformization, Theorem 18.10 in \cite{Kechris}]
\label{Fact: Luzin-Novikov}
    Let $X,Y$ be standard Borel spaces, $R\subseteq X\times Y$ Borel such that for every $x\in X$, the right fiber $R_x$ is countable and nonempty. Then there is a sequence of Borel functions $F_k:X\rightarrow Y$ such that $R=\bigcup_{k<\infty}Graph(F_k)$. 
\end{thm}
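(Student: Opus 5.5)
This is the Luzin--Novikov uniformization theorem, a classical result that the paper only quotes, so what follows sketches the standard proof. The plan is to reduce to the case where $R$ is closed in a suitable Polish topology and then enumerate the countable sections in a Borel way.

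First, fix Polish topologies on $X$ and $Y$. Refine the topology on $X\times Y$ so that $R$ becomes clopen while the topology remains Polish. Pulling back along a continuous bijection from a closed subset of the Baire space, I may assume $R$ is the continuous injective image $p[F]$ of a closed set $F\subseteq\mathbb{N}^{\mathbb{N}}$.

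Second, use a Lusin--Souslin style tree argument. For each basic clopen $N_s\subseteq\mathbb{N}^{\mathbb{N}}$, the image $p[F\cap N_s]$ is Borel, since injective images of Borel sets are Borel. Because each section $R_x$ is countable, for each $x$ the set $\{z\in F : \pi_X p(z)=x\}$ is a countable closed subset of $\mathbb{N}^{\mathbb{N}}$. A countable closed set contains no perfect subset, so every nonempty closed subset of it has an isolated point. Concretely, each $y\in R_x$ equals $p(z)$ for a unique $z$, and some $s\prec z$ has $z$ as the only point of $F\cap N_s$ above $x$.

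Third, define the uniformizing functions. For each finite sequence $s$, let $A_s$ be the set of $x$ whose fiber contains exactly one point of $F\cap N_s$. On $A_s$, set $F_s(x)$ to be the $Y$-coordinate of the image of that point. I need two facts here.
\begin{enumerate}[label=(\alph*)]
\item $A_s$ is Borel. The set of $x$ with at least one point is a projection, hence analytic, so I have to show it is Borel. I would prove this by writing ``exactly one'' via the separation theorem. Alternatively, I would apply the theorem for unique sections (Souslin: a Borel set with singleton sections is a Borel graph), which is the key input. This gives that $\{(x,y): (x,y)\in p[F\cap N_s],\ x\in A_s\}$ is Borel with unique sections, so $F_s$ is Borel on a Borel domain.
\item Off $A_s$, I extend $F_s$ using a fixed Borel selector $x\mapsto$ some point of $R_x$, obtained from one of the $F_t$ on a Borel partition.
\end{enumerate}
Then $R=\bigcup_s \mathrm{Graph}(F_s)$ by the isolated-point observation, and I re-enumerate the $s$ as $k<\infty$.

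The main obstacle is showing $A_s$ is Borel, i.e.\ that the projection of a Borel set with countable sections is Borel. In Kechris this is handled via the Borel-ness of $\{x: R_x\cap U\neq\emptyset\}$, obtained from the fact that sets with countable sections are $\sigma$-compact-section sets (Arsenin--Kunugui), or directly by a rank argument on the Cantor--Bendixson derivative. I would cite Theorem 18.10 of \cite{Kechris} for this step rather than redo it.
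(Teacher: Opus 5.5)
The paper does not prove this statement; it quotes it from Kechris and uses it as a black box. Judged on its own, your sketch has a genuine gap at the covering step. You start from the true fact that every nonempty closed subset of a countable closed set has an isolated point. From this you infer that every $z$ in the fiber $C_x=\{z\in F:\pi_Xp(z)=x\}$ has some $s\prec z$ with $C_x\cap N_s=\{z\}$, i.e.\ that \emph{every} point of $C_x$ is isolated. That is false. Take $C_x=\{0^\infty\}\cup\{0^n10^\infty:n\in\mathbb{N}\}$, which nothing in your first step rules out: every $N_s$ containing $0^\infty$ contains infinitely many points of $C_x$. Your $F_s$ on $A_s$ only return isolated points of the fibers. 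So does your selector, since it is assembled from the $F_t$ on the $A_t$. Hence $\bigcup_s\mathrm{Graph}(F_s)$ misses every point of $R$ coming from a non-isolated point of some $C_x$. Scatteredness has to be used transfinitely. Run the Cantor--Bendixson derivative $C_x^{(\alpha)}$ uniformly in $x$, work with $A_{s,\alpha}=\{x:|C_x^{(\alpha)}\cap N_s|=1\}$, and check that each stage is Borel. Then show, by a boundedness argument, that the Cantor--Bendixson ranks of all fibers lie below a single countable ordinal, so that countably many pairs $(s,\alpha)$ suffice.

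Second, the Borelness of $A_s$, which you rightly call the main obstacle, is never established. It amounts to the statement that projections of Borel sets with countable sections are Borel. That is the hard half of Luzin--Novikov itself, so citing Theorem 18.10 of Kechris for it is circular. Your fallbacks do not close the gap. The unique-sections route needs $\{(x,y)\in p[F\cap N_s]: x\in A_s\}$ to be Borel, which presupposes that $A_s$ is Borel; a priori this set is only coanalytic. Likewise, ``exactly one point'' is a difference of two nested analytic sets (at least one point minus at least two points), so the separation theorem does not apply directly. You need an independent proof of the projection theorem here, e.g.\ the effective argument via the effective perfect set theorem or Lusin's classical one. Essentially all the difficulty of the theorem lies in that step. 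For the paper's purposes a citation suffices, as the paper itself does, but then the sketch adds nothing.
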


Uniformization has several useful corollaries.

\begin{thm}[Exercise 18.14 in \cite{Kechris}]
\label{Fact: countabe-to-one image}
    Let $f:X\rightarrow Y$ be a countable-to-one Borel function between standard Borel spaces. Then $f(A)\subseteq Y$ is Borel for any Borel set $A\subseteq X$.
\end{thm}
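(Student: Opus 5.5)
The plan is to deduce this directly from the Luzin--Novikov uniformization (\Cref{Fact: Luzin-Novikov}) together with the Luzin--Souslin theorem that injective Borel images of Borel sets are Borel. We may first replace $X$ by $A$: $A$ is itself a standard Borel space, and $f\restriction A$ is still countable-to-one and Borel. So it suffices to show that $f(X)$ is Borel.

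Consider the set
\[
R=\{(y,x)\in Y\times X : f(x)=y\}.
\]
It is Borel, being the transpose of the graph of a Borel function, and each fiber $R_y=f^{-1}(y)$ is countable. Fibers over points outside $f(X)$ are empty, so we cannot apply \Cref{Fact: Luzin-Novikov} directly. Instead we use the version of Luzin--Novikov for Borel sets with countable (possibly empty) sections, which is the form proved in Theorem 18.10 of \cite{Kechris}. It gives a decomposition $R=\bigcup_k R_k$ in which each $R_k$ is the graph of a partial Borel function $F_k$ defined on a Borel set $D_k\subseteq Y$. This version also contains the statement that the projection $\mathrm{proj}_Y(R)$ is Borel.

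We then have $f(X)=\mathrm{proj}_Y(R)=\bigcup_k D_k$, a countable union of Borel sets, which proves the claim.

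If one prefers to rely only on the form of \Cref{Fact: Luzin-Novikov} quoted above, with nonempty fibers, there is a second route. Decompose the graph of $f$, viewed as a subset of $X\times Y$, differently: the map $f$ is countable-to-one, so Luzin--Novikov applied to $\{(y,x): f(x)=y\}$ is the natural setting. Alternatively, partition $X$ into countably many Borel pieces $X_n$ on which $f$ is injective, and then apply Luzin--Souslin to each $f\restriction X_n$, so that $f(X)=\bigcup_n f(X_n)$ is Borel. To obtain these pieces, fix a Borel linear order on $X$, for instance via a Borel isomorphism of $X$ with a subset of $\mathbb{R}$. Enumerate each fiber using the uniformizing functions, and let $X_n$ consist of the points that are the $n$-th element of their fiber in that enumeration.

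The main obstacle is the Borelness of these $X_n$, or equivalently of the domains $D_k$. This is exactly the content of the Luzin--Novikov theorem with empty sections allowed. So I would cite that version directly rather than reprove it.
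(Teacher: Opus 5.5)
Your first route is correct and matches the paper's intended argument. The paper gives no proof: it cites Kechris and presents the fact as a corollary of Luzin--Novikov uniformization, and applying the full Theorem 18.10 (empty sections allowed) to $\{(y,x): x\in A,\ f(x)=y\}$ immediately shows that its projection $f(A)$ is Borel. Your second route does not stand on its own, since, as you note, it needs the Borel domains $D_k$. However, the nonempty-fiber version quoted in the paper does suffice via a dummy-point trick: applying it to $\{(y,x)\in Y\times(X\sqcup\{*\}) : x=*\text{ or }(x\in A,\ f(x)=y)\}$ yields Borel maps $F_k\colon Y\to X\sqcup\{*\}$ with $f(A)=\bigcup_k F_k^{-1}(A)$.
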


We also use the following descriptive set-theoretic lemma, which could be interesting in its own right.

\begin{lem}
\label{Proposition: countable-to-smooth is enough}
    Let $X,Y$ be standard Borel spaces, $F$ a Borel orbit equivalence relation on $X$ which is induced by a Borel locally compact group action, $E$ a concretely classifiable Borel orbit equivalence relation on $Y$, $f:X\rightarrow Y$ a countable-to-one Borel map such that if $(x,y)\in F$ then $f(x)=f(y)$. Assume in addition that the set-theoretic map $\hat{f}:X/F\rightarrow Y/E$ is countable-to-one. Then $F$ is concretely classifiable.
\end{lem}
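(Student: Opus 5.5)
The plan is to turn the map $f$ into an $F$-invariant Borel map that is countable-to-one on a countable cross-section, and then pick a canonical point in each $F$-class by Luzin--Novikov. I read the hypothesis as: $(x,y)\in F$ implies $(f(x),f(y))\in E$. This is what makes $\hat f$ well defined, and it is implied by the literal statement $f(x)=f(y)$.

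First, since $E$ is concretely classifiable, fix a standard Borel space $Z$ and a Borel reduction $c\colon Y\to Z$ of $E$ to $=_Z$. Put $g=c\circ f\colon X\to Z$. This is Borel and $F$-invariant. Moreover, $g(x)=g(x')$ exactly when $f(x)\,E\,f(x')$. Since $\hat f$ is countable-to-one, each fiber $g^{-1}(z)$ is therefore a union of countably many $F$-classes. The map $g$ itself need not be countable-to-one, because an $F$-class may be uncountable. To fix this, apply \Cref{Fact: smooth OER has transversal} to the locally compact action inducing $F$. This gives a Borel countable cross-section $X_0\subseteq X$. Then $g\restriction X_0$ is countable-to-one: each fiber is a countable union of $F$-classes, and each of these meets $X_0$ in a countable set. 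Also, $F\restriction X_0$ is Borel because $F$ is.

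Next I construct a Borel retraction $s\colon X\to X_0$ with $x\,F\,s(x)$. The set $\{(x,y): y\in X_0,\ x\,F\,y\}$ is Borel and its sections are countable and nonempty. So \Cref{Fact: Luzin-Novikov} gives Borel functions whose graphs cover it, and taking the first of them yields $s$. This reduces the problem to showing that $F\restriction X_0$ is concretely classifiable. Indeed, if $\rho$ reduces $F\restriction X_0$ to equality, then $\rho\circ s$ reduces $F$ to equality.

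For $F\restriction X_0$, let $W=g(X_0)$, which is Borel by \Cref{Fact: countabe-to-one image}. Consider $R=\{(z,x): x\in X_0,\ g(x)=z\}\subseteq W\times X$. It is Borel with countable nonempty sections, so \Cref{Fact: Luzin-Novikov} gives Borel maps $F_k\colon W\to X_0$ with $R=\bigcup_k \mathrm{Graph}(F_k)$. For $x\in X_0$, let $k(x)$ be the least $k$ with $F_k(g(x))\,F\,x$. Such a $k$ exists because $x$ itself lies in the section $R_{g(x)}$. The map $x\mapsto k(x)$ is Borel, since $F$ is Borel. Finally define
\[
\rho(x)=F_{k(x)}(g(x)).
\]
If $x\,F\,x'$, then $g(x)=g(x')$, the same $F_k(g(x))$ are $F$-related to both, so $k(x)=k(x')$ and $\rho(x)=\rho(x')$. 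Conversely, $\rho(x)=\rho(x')$ gives $x\,F\,\rho(x)=\rho(x')\,F\,x'$. Hence $\rho$ is a Borel reduction of $F\restriction X_0$ to $=_{X_0}$.

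The main point to get right is the countability argument in the first step. The hypothesis that $\hat f$ is countable-to-one is exactly what makes the fibers of $g$ countable unions of classes. The cross-section from \Cref{Fact: smooth OER has transversal} is what makes the individual classes countable after restriction. Without both, Luzin--Novikov cannot be applied to $R$. Note that the countable-to-one hypothesis on $f$ itself is not needed in this argument.
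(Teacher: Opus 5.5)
Your proof is correct, and it selects representatives on the opposite side from the paper.

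\textbf{The paper's route.} It works in $Y$. It puts $Y_0=f(X_0)$ and takes a transversal $T$ for $E\restriction Y_0$, which has countable classes. It then enlarges $T$ recursively, via Luzin--Novikov generators $g_n$ of $E\restriction Y_0$, to a transversal $T'$ of a ``pushforward'' relation $f_*F$. Finally it sends $x$ to the point of $T'$ related to $f(x)$.

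\textbf{Your route.} You compose $f$ with a reduction $c$ of $E$ to equality, obtaining an $F$-invariant Borel map $g=c\circ f$ whose fibers contain only countably many $F$-classes. You then choose inside the cross-section: Luzin--Novikov enumerates $g^{-1}(z)\cap X_0$, and the least $k$ with $F_k(g(x))\,F\,x$ picks a canonical point of $[x]_F\cap X_0$.

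\textbf{What this buys.} You only need $f(x)\,E\,f(y)$ for $x\,F\,y$, which is the form that $\Gamma\mapsto\Comm_H(\Gamma)$ actually satisfies. You do not need $f$ to be countable-to-one. Crucially, your map separates the countably many $F$-classes lying over a single $E$-class.

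\textbf{Where the paper's argument falls short.} Its final map depends on $x$ only through $f(x)$, so it can be a reduction only if $f(x)=f(x')$ forces $x\,F\,x'$. This fails in the intended application. There $\Lambda=\Comm_H(\Gamma)$ and a proper finite-index subgroup of $\Lambda$ have the same commensurator but are not conjugate, since a conjugator would lie in $\Comm_H(\Lambda)=\Lambda$. In addition, the paper's formula $[T_k]_{F_0}=\bigcup_n g_n(T_k)$ is the $E_0$-saturation of $T_k$. Because $T$ is already an $E_0$-transversal, the recursion would add nothing to $T$.

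So your selection inside $X_0$, on the $X$ side, is exactly the ingredient the lemma needs.
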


\begin{proof}
    Apply \Cref{Fact: smooth OER has transversal} to get a countable Borel cross-section $X_0\subseteq X$ for $F$. By \Cref{Fact: countabe-to-one image} the set $Y_0=f(X_0)$ is Borel. We also observe that since $\hat{f}$ is countable-to-one, the equivalence relation $E_0:=E\restriction Y_0$ has countable classes. Since $E_0$ inherits classifiability from $E$, it admits a Borel transversal $T\subseteq Y_0$. We now apply \Cref{Fact: Luzin-Novikov} to the equivalence relation $E_0$ and get a sequence of Borel functions $g_n:Y_0\rightarrow Y_0$ which span $E_0$. Write $F_0=f_*F\restriction Y_0$ for the pushforward equivalence relation restricted to $Y_0$. Define recursively $T_0=T$ and:

    \[T_{k+1}=T_k\cup \left\{g_k(t)\mid t\in T,g_k(t)\notin [T_k]_{F_0}\right\}\]

    Where $[T_k]_{F_0}=\left\{y\in Y_0\mid \exists x\in T_k\right (x,y)\in F_0\}$. Note that $[T_k]_{F_0}$ is a Borel set, as it is just equal to $\bigcup_{n<\infty} g_n(T_k)$, hence $T_{k+1}$ is Borel as well. We thus get that the set $T'=\bigcup T_k$ is a Borel transversal for $F_0$, and so the map taking $x\in X$ to the unique point $s(x)\in T'$ which is $f_*F$-equivalent to $f(x)$ is Borel, hence it witnesses that $F$ is concretely classifiable.
\end{proof}

\subsection*{Descriptive set theory on the space of subgroups}

Recall the following two standard facts.

\begin{fct}
With $H$ as above we have the following.
    \begin{itemize}
        \item The intersection map $\cap:\mathcal{S}_{\fg}(H)\times\mathcal{S}_{\fg}(H)\rightarrow\mathcal{S}_d(H)$ is Borel.
        \item The subgroup relation $\Inc=\left\{(\Gamma_1,\Gamma_2)\in \mathcal{S}_{\fg}(H)\times \mathcal{S}_{\fg}(H)\mid \Gamma_1\leq \Gamma_2\right\}$ is Borel.
    \end{itemize}
\end{fct}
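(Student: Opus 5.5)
The plan is to work with explicit Borel bases of the Chabauty/Fell topology and to reduce both claims to the Borel-ness of countably many sets of the form ``$\Gamma$ meets $U$'' and ``$\Gamma$ misses $K$''. The $\sigma$-algebra on $\mathcal{F}(H)$ is the Effros Borel structure. It is generated by the sets $\{A : A\cap U\neq\emptyset\}$ for $U$ ranging over a countable basis of relatively compact open sets of $H$. Consequently, a map into $\mathcal{S}_d(H)$ is Borel as soon as each preimage $\{(\Gamma_1,\Gamma_2): \Gamma_1\cap\Gamma_2\cap U\neq\emptyset\}$ is Borel. By the Kuratowski--Ryll-Nardzewski selection theorem, there are Borel maps $d_n:\mathcal{F}(H)\to H$ such that $\{d_n(A)\}_n$ is dense in $A$ for every nonempty $A$. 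Since discrete subgroups are countable and closed, the sequence $(d_n(\Gamma))_n$ in fact enumerates $\Gamma$ exactly, possibly with repetitions. I will use this enumeration throughout.

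For the subgroup relation, I write
\[
\Gamma_1\leq\Gamma_2\iff \forall n\ \ d_n(\Gamma_1)\in\Gamma_2 .
\]
The relation $\{(h,A)\in H\times\mathcal{F}(H): h\in A\}$ is closed in the Fell topology (equivalently, Borel), because $h\notin A$ iff some basic open $U\ni h$ has $A\cap U=\emptyset$, and it suffices to range over countably many $U$. The composition with the Borel maps $(\Gamma_1,\Gamma_2)\mapsto (d_n(\Gamma_1),\Gamma_2)$ is therefore Borel. Taking the countable intersection over $n$ shows that $\Inc$ is Borel, and this argument does not even need finite generation.

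For the intersection map, note first that $\Gamma_1\cap\Gamma_2$ is a closed subgroup contained in the discrete group $\Gamma_1$, so it lies in $\mathcal{S}_d(H)$. Then
\[
\Gamma_1\cap\Gamma_2\cap U\neq\emptyset \iff \exists n\ \ \bigl(d_n(\Gamma_1)\in U \ \wedge\ d_n(\Gamma_1)\in\Gamma_2\bigr),
\]
because every element of $\Gamma_1$ is some $d_n(\Gamma_1)$. This is a countable union of Borel sets by the previous paragraph, so the map $\cap$ is Borel.

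The only delicate point is the claim that $d_n(\Gamma)$ exhausts $\Gamma$, rather than merely being dense in it. This is exactly where discreteness is used: a dense subset of a discrete closed set is the whole set. Finite generation plays no role beyond restricting the domain, and $\mathcal{S}_{\fg}(H)$ need only be treated as a Borel subset of $\mathcal{S}(H)$ (or the statements can be read on all of $\mathcal{S}_d(H)$). If one prefers to avoid selection theorems, the alternative is to fix a finite generating tuple chosen Borel-ly and enumerate words. However, the selector approach is cleaner, and it is the one I would use.
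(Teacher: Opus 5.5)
Your argument is correct, and it stands on its own: the paper gives no proof of this fact and simply recalls it as standard. What you wrote is the standard verification via the Effros Borel structure and Kuratowski--Ryll-Nardzewski selectors $d_n$. The one substantive observation is that for a closed discrete $\Gamma$ the dense sequence $(d_n(\Gamma))_n$ exhausts $\Gamma$. This is exactly what lets you write $\Gamma_1\cap\Gamma_2\cap U\neq\emptyset$ as a countable union of Borel conditions.

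Your route also gives a bit more generality than the statement asks for:
\begin{itemize}
\item Inclusion comes out Borel on all of $\mathcal{F}(H)\times\mathcal{F}(H)$, since density alone suffices there because $\Gamma_2$ is closed.
\item The intersection map is Borel on $\mathcal{S}_d(H)\times\mathcal{S}(H)$, where only the first factor needs to be discrete.
\item The same enumeration gives direct Borel descriptions of other sets the paper uses later. For example, for $\Gamma\leq\Gamma'$ one has $[\Gamma':\Gamma]\geq n$ if and only if there are $m_1,\dots,m_n$ with $d_{m_i}(\Gamma')d_{m_j}(\Gamma')^{-1}\notin\Gamma$ for $i\neq j$.
\end{itemize}

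One small imprecision: ``closed in the Fell topology (equivalently, Borel)'' is not an equivalence, since closed implies Borel but not conversely. Your countable-union description
\[
\{(h,A): h\notin A\}=\bigcup_{U}U\times\{A: A\cap U=\emptyset\}
\]
proves Borelness, which is all you use. Genuine closedness also holds, but it needs the rectangles $U\times\{A: A\cap\overline{U}=\emptyset\}$ with $\overline{U}$ compact. For open $U$, the set $\{A: A\cap U=\emptyset\}$ is Fell-closed rather than open.
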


We include an additional fundamental property of Chabauty space.

\begin{lem}
    For any $n\in\mathbb{N}$, write $\Inc_n(H)=\left\{(\Gamma,\Gamma')\in\Inc\mid [\Gamma':\Gamma]=n\right\}$. Then $\Inc_n(H)\subseteq\mathcal{S}(H)\times\mathcal{S}(H)$ is locally closed, i.e. there are closed sets $C_1,C_2\subseteq \mathcal{S}_{\fg}(G)\times \mathcal{S}_{\fg}(G)$ such that $\Inc_n(H)=C_1\setminus C_2$. In particular, $\Inc_n(H)$ is a Borel set.
\end{lem}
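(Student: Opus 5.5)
We write $\Inc_n(H)$ as the difference of two sets defined by index bounds, $\Inc_{\geq n}$ and $\Inc_{\geq n+1}$, where
\[\Inc_{\geq k}=\left\{(\Gamma,\Gamma')\in\Inc\mid [\Gamma':\Gamma]\geq k\right\},\]
with infinite index allowed. Then $\Inc_n(H)=\Inc_{\geq n}\setminus\Inc_{\geq n+1}$.

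We take $C_2=\overline{\Inc_{\geq n+1}}$. It then suffices to show that $\Inc_{\geq n}\cap\overline{\Inc_{\geq n+1}}\subseteq \Inc_{\geq n+1}$, i.e. the condition ``index $\geq k$'' is (relatively) closed. For $C_1$ we take the closure of $\Inc_{\geq n}$ intersected with $\Inc$. First, $\Inc$ itself is closed in Chabauty: if $\Gamma_j\leq\Gamma'_j$ converge to $(\Gamma,\Gamma')$, then every $\gamma\in\Gamma$ is a limit of some $\gamma_j\in\Gamma_j\subseteq\Gamma'_j$, so $\gamma\in\Gamma'$. So the key claim is that, restricted to pairs of discrete groups, $\{[\Gamma':\Gamma]\geq k\}$ is closed, or at least closed relative to $\Inc$ restricted to $\mathcal{S}_d\times\mathcal{S}_d$. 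This is enough for local closedness in the subspace where the statement is applied; in the finitely generated setting, $\mathcal{S}_{\fg}(G)$ is where the closed sets live.

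To prove the key claim, we suppose $(\Gamma_j,\Gamma'_j)\to(\Gamma,\Gamma')$ with $[\Gamma'_j:\Gamma_j]\geq k$, and aim to show $[\Gamma':\Gamma]\geq k$. Argue by contradiction: suppose $[\Gamma':\Gamma]=m<k$, with coset representatives $g_1,\dots,g_m$. Pick $h^{(j)}_1,\dots,h^{(j)}_k\in\Gamma'_j$ in distinct $\Gamma_j$-cosets. We would like these to stay in a fixed compact set; then, passing to subsequences, $h^{(j)}_i\to h_i\in\Gamma'$ and $(h^{(j)}_i)^{-1}h^{(j)}_l\to h_i^{-1}h_l$. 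By pigeonhole two of the limits lie in the same $\Gamma$-coset, so $h_i^{-1}h_l\in\Gamma$. Here discreteness of $\Gamma'$ is used. Chabauty convergence of discrete groups near a discrete limit gives a uniform local picture: within a small neighborhood $U$ of $h_i^{-1}h_l$, for large $j$, $\Gamma'_j\cap U$ is a single point close to $h_i^{-1}h_l$, and $\Gamma_j$ also has a point in $U$, since $h_i^{-1}h_l\in\Gamma=\lim\Gamma_j$. Hence $(h^{(j)}_i)^{-1}h^{(j)}_l\in\Gamma_j$, contradicting distinctness of the cosets.

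The main obstacle is ensuring the coset representatives $h^{(j)}_i$ can be chosen in a bounded region, and the uniform discreteness near the limit. Boundedness we handle by the reverse observation: since $\Gamma'=\bigcup g_s\Gamma$, fix a compact $K$ containing a neighborhood of $\{g_s\}$ plus a ball $B$. Any $h\in\Gamma'_j$ can then be moved by elements of $\Gamma_j$ close to elements of $\Gamma$ into a bounded region. Concretely, one uses that a fundamental domain for $\Gamma$ acting on $\Gamma'$ is finite, and that Chabauty convergence controls $\Gamma_j\cap K$ and $\Gamma'_j\cap K$. If this gets delicate in general, we instead settle for what the paper needs. It suffices that $\Inc_n$ is Borel, which follows more cheaply: $[\Gamma':\Gamma]\geq k$ iff there exist $h_1,\dots,h_k\in\Gamma'$ with $h_i^{-1}h_l\notin\Gamma$. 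Using a Borel selection of a dense sequence of $\Gamma'$ (Kuratowski–Ryll-Nardzewski), which for discrete groups enumerates all elements, this becomes a countable union of Borel conditions. Thus we get Borelness even if the locally closed refinement requires extra care.
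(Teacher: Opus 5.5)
\paragraph{The key claim is false.} Your main route rests on the claim that $\{[\Gamma':\Gamma]\geq k\}$ is closed, at least among discrete pairs. This is false, and the obstacle you flagged (keeping coset representatives in a bounded region) is a genuine obstruction, not a technicality.

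In $G=\PSL_2(\R)$ take a hyperbolic $h$ and set $\Gamma_j=\langle h^{2j}\rangle\leq\Gamma'_j=\langle h^{j}\rangle$. Every pair has index $2$. All nontrivial elements have translation length at least $j\,\ell(h)$, so they leave every compact set. Hence $(\Gamma_j,\Gamma'_j)\to(\{e\},\{e\})$, a pair of finitely generated discrete groups of index $1$. This has three consequences:
\begin{itemize}
\item $\Inc_{\geq 2}$ is not closed, even inside $\mathcal{S}_{\fg}(G)\times\mathcal{S}_{\fg}(G)$.
\item The representative $h^{j}$ of the nontrivial coset escapes to infinity and cannot be pulled back by elements of $\Gamma_j$, so the boundedness step cannot be repaired.
\item Your choice $C_2=\overline{\Inc_{\geq n+1}}$ already fails for $n=1$, since $(\{e\},\{e\})\in\overline{\Inc_{\geq 2}}\cap\Inc_1(G)$.
\end{itemize}

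\paragraph{The correct direction, and what the fallback gives.} The semicontinuity goes the other way: the index is lower semicontinuous, i.e.\ $\Inc_{\geq k}$ is relatively open in $\Inc$. The argument is as follows.
\begin{itemize}
\item Suppose $g_1,\dots,g_k\in\Gamma'$ lie in distinct cosets of $\Gamma$, and choose $g_i^{(j)}\in\Gamma'_j$ with $g_i^{(j)}\to g_i$.
\item Since $\Gamma$ is closed, for each $i\neq l$ a compact neighbourhood $V$ of $g_i^{-1}g_l$ misses $\Gamma$, so $\Gamma_j\cap V=\emptyset$ for large $j$.
\item Hence $(g_i^{(j)})^{-1}g_l^{(j)}\notin\Gamma_j$, and $[\Gamma'_j:\Gamma_j]\geq k$.
\end{itemize}
Combined with your correct proof that $\Inc$ is closed, this makes $\{[\Gamma':\Gamma]\leq m\}$ closed, so $\Inc_n=\{\leq n\}\setminus\{\leq n-1\}$. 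This is exactly the paper's route. The paper argues by contradiction from $m+1$ pairwise inequivalent elements of the limit $\Gamma'$; approximating them by elements of $\Gamma'_j$ is the step that makes the contradiction land inside $\Gamma'_j$. No discreteness and no boundedness are needed.

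Your Kuratowski--Ryll-Nardzewski fallback is a valid proof that $\Inc_n(H)$ is Borel. However, that is only the ``in particular'' clause. It does not establish local closedness, which is the actual content of the lemma.
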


\begin{proof}
    It suffices to prove that $\bigcup_{k<n}\Inc_n(H)$ is closed. Recall that $\mathcal{S}(H)\subseteq\mathcal{F}(H)$ is a topological subspace inclusion.  This space is equipped with the continuous action given by \emph{multiplication on the left}. Suppose $\left((\Gamma_n,\Gamma'_n)\right)_{n\in\mathbb{N}}$ is a sequence in $\Inc_m(H)$ which converges to a pair $(\Gamma,\Gamma')$, and suppose $g_1,...,g_{m+1}\in \Gamma'$ are $\Gamma$-inequivalent. Since $\Gamma_n\rightarrow \Gamma$ we also have $g_ig_j^{-1}\Gamma_n\rightarrow g_ig_j^{-1}\Gamma$ for all $i\neq j$. Pick an compact identity neighborhood $K$ in $H$ such that $\Gamma'\cap K=\{e\}$, and in particular $g_ig_j^{-1}\Gamma\cap K=\emptyset$ for $i<j$. Thus, for $N\gg 0$ we have $g_ig_j^{-1}\Gamma_N\cap U=\emptyset$ for $i\neq j$, and since $e\in K$ we get $g_jg_i^{-1}\notin \Gamma_N$, meaning that for $N\gg0$ we have that $g_1,...,g_{m+1}$ are pairwise $\Gamma_N$-inequivalent, a contradiction.
\end{proof}

We write $\text{Inc}_{<\infty}(H)=\bigcup_{n<\infty}\text{Inc}_n(H)$.

\subsection{Commensurability and commensurators}

Recall the following definition.

\begin{defn}
    Two subgroups $H_1,H_2\leq H$ are called \emph{commensurable} if $H_1\cap H_2$ is of finite index in $H_1$ and in $H_2$.
\end{defn}

This gives rise to the following refinement of the normalizer of a subgroup.

\begin{defn}
     Given a subgroup $H_0\leq H$, we define the \emph{commensurator of $H_0$ in $H$} as follows:

    \[\Comm_H(H_0)=\left\{g\in H\mid H_0,H^g_0\text{ are commensurable}\right\}\]
\end{defn}

Note that this is a subgroup of $H$ which contains the normalizer $N_H(H_0)$, and in particular it contains $H_0$. 

We now prove an elementary fact about commensurators.

\begin{prop}
\label{Proposition: commensurability implies commensurator}
    Suppose $\Gamma_1,\Gamma_2\leq H$ are commensurable, then we have the equality $\Comm_{H}(\Gamma_1)=\Comm_H(\Gamma_2)$. In particular, if $[\Comm_H(\Gamma_1):\Gamma_1]<\infty$ then $\Comm_H(\Comm_H(\Gamma_1))=\Comm_H(\Gamma_1)$.
\end{prop}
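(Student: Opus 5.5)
The plan is to first show that commensurability is an equivalence relation on subgroups of $H$ and is preserved by conjugation. The first claim then follows by a symmetric inclusion, and the ``in particular'' part by applying it to $\Gamma_1$ and $\Comm_H(\Gamma_1)$.

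\emph{Step 1: transitivity.} Suppose $A\sim B$ and $B\sim C$, where $\sim$ denotes commensurability. Then $A\cap B\cap C=(A\cap B)\cap(B\cap C)$ is an intersection of two finite-index subgroups of $B$, so it has finite index in $B$. Hence it has finite index in $A\cap B$, and therefore in $A$, since $[A:A\cap B]<\infty$. Symmetrically it has finite index in $C$. Since $A\cap B\cap C\le A\cap C$, the group $A\cap C$ has finite index in both $A$ and $C$. Reflexivity and symmetry are immediate.

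\emph{Step 2: conjugation invariance.} For $g\in H$ we have $(A\cap B)^g=A^g\cap B^g$, and conjugation preserves indices. So $A\sim B$ implies $A^g\sim B^g$.

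\emph{Step 3: the first claim.} Let $g\in\Comm_H(\Gamma_1)$, so $\Gamma_1\sim\Gamma_1^g$. From $\Gamma_2\sim\Gamma_1$, Step 2 gives $\Gamma_1^g\sim\Gamma_2^g$. Chaining with Step 1,
\[\Gamma_2\sim\Gamma_1\sim\Gamma_1^g\sim\Gamma_2^g,\]
so $g\in\Comm_H(\Gamma_2)$. By symmetry the two commensurators are equal.

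\emph{Step 4: the ``in particular''.} As noted after the definition, $\Gamma_1\le\Comm_H(\Gamma_1)$. If $[\Comm_H(\Gamma_1):\Gamma_1]<\infty$, then $\Gamma_1\cap\Comm_H(\Gamma_1)=\Gamma_1$ has finite index in both $\Gamma_1$ and $\Comm_H(\Gamma_1)$. So $\Gamma_1\sim\Comm_H(\Gamma_1)$, and the first part yields $\Comm_H(\Comm_H(\Gamma_1))=\Comm_H(\Gamma_1)$.

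There is no real obstacle here. The only point needing care is the index bookkeeping in Step 1, namely passing finite index through the chain $A\cap B\cap C\le A\cap B\le A$.
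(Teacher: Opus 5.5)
Your proof is correct, but it is organized differently from the paper's.

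The paper first reduces, via $\Gamma_1\cap\Gamma_2$, to the case of a finite-index inclusion $\Gamma_1\le\Gamma_2$. It then proves the two inclusions of commensurators separately, using explicit index estimates such as
\[
[\Gamma_2:\Gamma_2\cap\Gamma_2^g]\le[\Gamma_2:\Gamma_1]\cdot[\Gamma_1:\Gamma_1\cap\Gamma_1^g].
\]
Each time it checks finiteness of the index on one side only; the other side, which follows by applying the estimate to $g^{-1}$, is left implicit. The ``in particular'' clause is not argued separately.

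You instead isolate a single lemma: commensurability is a conjugation-invariant equivalence relation. Both inclusions then come from the chain $\Gamma_2\sim\Gamma_1\sim\Gamma_1^g\sim\Gamma_2^g$ together with the symmetry of the hypothesis, with no nested-case reduction and no one-sided index checks. You also spell out the ``in particular'' deduction explicitly.

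Your route is cleaner, and it buys something the paper needs later. The transitivity of commensurability proved in your Step 1 is exactly what the paper invokes without proof, both in showing that equal commensurators force commensurability in an SCC and in the last part of the Selberg-type proposition. The paper's computation is more hands-on and gives direct index bounds in the nested case. Yours yields comparable bounds only after unwinding the index bookkeeping in Step 1.
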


\begin{proof}
    Since having equal commensurators is an equivalence relation on the set of subgroups, it is sufficient to prove that if $\Gamma_1\leq \Gamma_2$ is of finite index then $\Comm_H(\Gamma_1)=\Comm_H(\Gamma_2)$. Thus, suppose $\Gamma_1\leq \Gamma_2$ have $[\Gamma_2:\Gamma_1]<\infty$. If $g\in \Comm_H(\Gamma_1)$ then:

    \[[\Gamma_2:\Gamma_2\cap \Gamma_2^g]\leq [\Gamma_2:\Gamma_1\cap \Gamma_1^g]= [\Gamma_2:\Gamma_1]\cdot [\Gamma_1:\Gamma_1\cap \Gamma_1^g]\]\\[-3ex]
    
    Suppose now that $g\in \Comm_G(\Gamma_2)$. First note that we have:

    \[[\Gamma_2\cap \Gamma^g_2:\Gamma_1\cap \Gamma_1^g]=[\Gamma_2\cap \Gamma_2^g:\Gamma_2\cap \Gamma_1^g]\cdot [\Gamma_2\cap \Gamma_1^g:\Gamma_1\cap \Gamma_1^g]\leq [\Gamma_2^g:\Gamma_1^g]\cdot [\Gamma_2:\Gamma_1]<\infty\]\\[-3ex]

    This gives us:

    \[[\Gamma_1:\Gamma_1\cap \Gamma_1^g]\leq [\Gamma_2:\Gamma_1\cap \Gamma_1^g]=[\Gamma_2:\Gamma_2\cap \Gamma_2^g]\cdot [\Gamma_2\cap \Gamma_2^g:\Gamma_1\cap \Gamma_1^g]<\infty\]
\end{proof}

The study of commensurators of discrete subgroups of lie groups is an active area of research (see \cite{Marg1},\cite{CreuShal},\cite{Shal}). We recall the following classical result of Greenberg, which is a crucial ingredient in our proof:

\begin{thm}[Theorem 2(3) and 2(4) in\cite{FuchsianCommensurators}]
\label{Fact: Discrete+finite index commensurators}
    Let $\Gamma\leq G$ be a finitely-generated nonelementary Fuchsian group which is not a lattice. Then $\Comm_G(\Gamma)$ satisfies $[\Comm_G(\Gamma):\Gamma]<\infty$ (and in particular, is discrete).
\end{thm}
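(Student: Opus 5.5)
This is Greenberg's theorem, so the plan is to reconstruct its proof from the geometry of the limit set. Let $\Gamma$ be finitely generated, nonelementary and not a lattice. Its limit set $\Lambda(\Gamma)\subseteq\partial\mathbb{H}^2$ is then a proper closed subset, and in fact a Cantor set.

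\textbf{Step 1: $\Comm_G(\Gamma)$ preserves $\Lambda(\Gamma)$.} Commensurable groups share their limit set, since a finite index subgroup has the same limit set. For $g\in\Comm_G(\Gamma)$ this gives
\[g\Lambda(\Gamma)=\Lambda(\Gamma^g)=\Lambda(\Gamma\cap\Gamma^g)=\Lambda(\Gamma).\]
Hence $\Comm_G(\Gamma)$ lies in the stabilizer $S$ of $\Lambda(\Gamma)$ in $G$.

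\textbf{Step 2: $S$ is discrete.} Because $\Lambda(\Gamma)$ is a Cantor set and is not the whole circle, the identity component of the closed subgroup $S$ must preserve a Cantor set. No nontrivial connected subgroup of $G$ does this. Such a subgroup is one-parameter or larger, and any one-parameter subgroup has orbits on the circle that are open arcs, or else it fixes at most two points. A perfect set with empty interior cannot be a union of such orbits plus fixed points unless the subgroup acts trivially on it, and nonelementary limit sets have more than two points. So $S^\circ$ is trivial and $S$ is discrete. Alternatively, $S$ preserves the convex hull $C(\Lambda)$, and hence its boundary geodesics; these form a discrete family, because $\Gamma$ is geometrically finite (finitely generated Fuchsian), which again forces discreteness.

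\textbf{Step 3: $[S:\Gamma]<\infty$.} Both $\Gamma\le S$ are discrete, so the natural map between the convex cores $\Gamma\backslash C(\Lambda)\to S\backslash C(\Lambda)$ is a branched covering with degree $[S:\Gamma]$. The hyperbolic area of the convex core of $\Gamma$ is finite, by geometric finiteness, and positive, since $\Gamma$ is nonelementary. Also $\Lambda(S)=\Lambda(\Gamma)$, so $S$ is itself geometrically finite with positive finite convex core area; one can argue this via Siegel's theorem, which states that discrete supergroups of a finitely generated group have area bounded below. The degree therefore equals the ratio of the two areas, which is finite, so $[S:\Gamma]<\infty$. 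This gives $[\Comm_G(\Gamma):\Gamma]\le[S:\Gamma]<\infty$, and discreteness of $\Comm_G(\Gamma)$ follows because it contains a discrete group with finite index.

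The main obstacle is Step 2, proving that $S$ is discrete. I would first try the one-parameter subgroup classification (elliptic, parabolic and hyperbolic flows) to show that none of them can preserve a nonelementary Cantor limit set. The non-lattice hypothesis is essential exactly here: for a lattice, $\Lambda$ is the whole circle and $S=G$.
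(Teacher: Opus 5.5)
The paper does not prove this statement. It imports it as a black box (Theorem 2(3),(4) of Greenberg), so there is no internal argument to compare against. Your proposal is a correct, self-contained proof along the classical lines: $\Comm_G(\Gamma)\le S=\mathrm{Stab}_G(\Lambda(\Gamma))$, $S$ is discrete because $\Lambda(\Gamma)$ is nowhere dense, and $[S:\Gamma]<\infty$ by comparing areas of convex cores.

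What your route buys over the citation:
\begin{itemize}
\item It shows where each hypothesis enters. Finite generation together with the non-lattice assumption makes $\Lambda$ a proper, hence nowhere dense, subset of $\partial\mathbb{H}^2$, which is what makes $S$ discrete. Nonelementarity gives $|\Lambda|\ge 3$ and positive core area. Finite generation gives finite core area.
\item It gives a slightly sharper conclusion. Every element of a group containing $\Gamma$ with finite index commensurates $\Gamma$, so you actually get $\Comm_G(\Gamma)=\mathrm{Stab}_G(\Lambda(\Gamma))$. The citation does not provide this identification.
\end{itemize}

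Two small repairs are needed.

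\textbf{Step 2.} This is not the obstacle you fear. $S$ is closed, and for $x\in\Lambda$ the orbit $S^\circ x$ is a connected subset of the totally disconnected set $\Lambda$, hence a point. So $S^\circ$ fixes at least three points of $\partial\mathbb{H}^2$ and is trivial. The convex-hull alternative is vaguer and can be dropped.

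\textbf{Step 3.} The appeal to Siegel is misplaced, since Siegel's area bound concerns lattices and $S$ is not one. It is also unnecessary. In addition, the phrase ``branched covering of degree $[S:\Gamma]$'' tacitly presupposes the finiteness you are trying to prove. Argue directly instead:
\begin{itemize}
\item Let $F$ be a fundamental domain for $S$. Then $C(\Lambda)=\bigcup_{s\in S}s(F\cap C(\Lambda))$ is a countable union, and $C(\Lambda)$ has nonempty interior. Hence $a=\mathrm{area}(F\cap C(\Lambda))>0$.
\item Write $S=\bigsqcup_i\Gamma g_i$. Then $\bigcup_i g_iF$ is a fundamental domain for $\Gamma$.
\item By $S$-invariance of $C(\Lambda)$, $\mathrm{area}(\Gamma\backslash C(\Lambda))=[S:\Gamma]\cdot a$, with $[S:\Gamma]=\infty$ allowed.
\end{itemize}
The left-hand side is finite because $\Gamma$ is geometrically finite, so $[S:\Gamma]<\infty$. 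Finiteness of the core area of $S$ is then a consequence, not an input you need to justify separately.
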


In particular, $\Comm_G(\Gamma)$ is finitely-generated in this case.

\section{Small Commensurator Classes}

We now isolate a key property for a family of finitely-generated discrete subgroups of $G$.

\begin{defn}
\label{Definition: SCC}
    Call a nonempty, $H$-invariant Borel set $X\subseteq \mathcal{S}_{\fg}(H)$ a \emph{small commensurator class} (SCC for short) if it satisfies the following two conditions:

    \begin{itemize}
        \item If $\Gamma\in X$ and $\Gamma$ is commensurable to some $\Gamma_0$ then $\Gamma_0\in X$.
        \item If $\Gamma\in X$ then $[\Comm_H(\Gamma):\Gamma]<\infty$.
    \end{itemize}
\end{defn}

\begin{rem}
    Note that any $\Gamma$ in an SCC has that $\Comm_H(\Gamma)$ is necessarily finitely-generated and discrete.
\end{rem}

\begin{exm}
\label{Example: Fuchsian}
    Write $\mathcal{S}_{\fg}(\PSL_2(\R))'\subseteq \mathcal{S}_{\fg}(\PSL_2(\R))$ for the nonelementary finitely-generated Fuchsian groups which are not lattices. By \Cref{Fact: Discrete+finite index commensurators} we have that $\mathcal{S}_{\fg}(G)'$ is an SCC.
\end{exm}

For an SCC, we have the following inverse to \Cref{Proposition: commensurability implies commensurator}:

\begin{prop}
\label{Proposition: commensurator implies commensurability}
    Let $X\subseteq\mathcal{S}_{\fg}(H)$ be an SCC, and fix $\Gamma_1,\Gamma_2\leq H$ such that $\Comm_H(\Gamma_1)=\Comm_H(\Gamma_2)$, then $\Gamma_1,\Gamma_2$ are commensurable.
\end{prop}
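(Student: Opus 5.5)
The statement presumably intends $\Gamma_1,\Gamma_2\in X$; I will assume this, since the SCC hypothesis is only useful for members of $X$. The plan is to show that each $\Gamma_i$ is commensurable with the common group $C:=\Comm_H(\Gamma_1)=\Comm_H(\Gamma_2)$. Commensurability is an equivalence relation on subgroups, so it will then follow that $\Gamma_1$ and $\Gamma_2$ are commensurable.

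For each $i$, the defining property $g\in\Comm_H(g)$ gives $\Gamma_i\leq \Comm_H(\Gamma_i)=C$, as noted right after the definition of the commensurator. Since $\Gamma_i\in X$ and $X$ is an SCC, the second clause of \Cref{Definition: SCC} gives $[C:\Gamma_i]<\infty$. Hence $\Gamma_i\cap C=\Gamma_i$ has index $1$ in $\Gamma_i$ and finite index in $C$, so $\Gamma_i$ and $C$ are commensurable.

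It remains to justify transitivity of commensurability. I would do this directly. We have $[C:\Gamma_1]<\infty$ and $[C:\Gamma_2]<\infty$, and an intersection of two finite-index subgroups has finite index. Concretely,
\[[C:\Gamma_1\cap\Gamma_2]\leq [C:\Gamma_1]\cdot[C:\Gamma_2],\]
which follows from the injection $C/(\Gamma_1\cap\Gamma_2)\hookrightarrow C/\Gamma_1\times C/\Gamma_2$. Therefore $\Gamma_1\cap\Gamma_2$ has finite index in $C$, and hence finite index in both $\Gamma_1$ and $\Gamma_2$. This is exactly the statement that $\Gamma_1$ and $\Gamma_2$ are commensurable.

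There is no real obstacle here. The only point needing care is the hypothesis that the $\Gamma_i$ lie in $X$, which is what allows us to use the finite-index condition. The first clause of the SCC definition (closure under commensurability) is not needed for this argument.
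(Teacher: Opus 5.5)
Your proof is correct and follows the paper's argument: each $\Gamma_i$ has finite index in the common commensurator by the SCC condition, and transitivity of commensurability then gives the result. You are right that $\Gamma_1,\Gamma_2\in X$ is implicitly needed, since the paper's proof uses it. Your coset-injection argument just makes the cited transitivity explicit. One small slip: ``$g\in\Comm_H(g)$'' should read $\Gamma_i\leq\Comm_H(\Gamma_i)$.
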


\begin{proof}
    Since $X$ is an SCC, any member of it is commensurable to its commensurator. The proposition follows from the fact that commensurability is an equivalence relation.
\end{proof}

The following lemma characterizes commensurators in SCCs.

\begin{lem}
\label{Lemma: maximality of commensurators}
    Let $X$ be an SCC and fix $\Gamma,\Gamma'\in X$. Then $\Gamma'=\Comm_H(\Gamma)$ if and only if $\Gamma\leq \Gamma'$ with finite index, and for every $\Gamma_0\geq \Gamma$ with $[\Gamma_0:\Gamma]<\infty$ we have $\Gamma_0\leq \Gamma'$.
\end{lem}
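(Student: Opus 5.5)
We argue the two directions separately. Throughout we use that $X$ is closed under commensurability, so every group commensurable to $\Gamma$ lies in $X$ and has finite index in its own commensurator. We also use that $\Comm_H(\Gamma)$ is itself commensurable to $\Gamma$: it contains $\Gamma$ with finite index by the second SCC condition.

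\emph{Forward direction.} Suppose $\Gamma'=\Comm_H(\Gamma)$.
\begin{enumerate}
\item We have $\Gamma\leq\Comm_H(\Gamma)$ for any subgroup, and the index is finite by the definition of an SCC.
\item Let $\Gamma_0\geq\Gamma$ with $[\Gamma_0:\Gamma]<\infty$, and take $g\in\Gamma_0$. Then $\Gamma^g\leq\Gamma_0^g=\Gamma_0$, and $[\Gamma_0:\Gamma^g]=[\Gamma_0:\Gamma]<\infty$.
\item Hence $\Gamma$ and $\Gamma^g$ are two finite-index subgroups of $\Gamma_0$. Their intersection therefore has finite index in $\Gamma_0$, and so also in each of $\Gamma$ and $\Gamma^g$.
\item So $\Gamma$ and $\Gamma^g$ are commensurable, i.e. $g\in\Comm_H(\Gamma)=\Gamma'$. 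This gives $\Gamma_0\leq\Gamma'$.
\end{enumerate}
This direction is elementary and does not really use the SCC hypothesis beyond finiteness of the index.

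\emph{Reverse direction.} Suppose $\Gamma\leq\Gamma'$ with finite index, and that $\Gamma'$ contains every finite-index overgroup of $\Gamma$.
\begin{enumerate}
\item Set $C=\Comm_H(\Gamma)$. By the SCC condition $[C:\Gamma]<\infty$, so $C$ is itself a finite-index overgroup of $\Gamma$. The maximality hypothesis then gives $C\leq\Gamma'$.
\item Conversely, $\Gamma'\leq\Comm_H(\Gamma')$ always holds.
\item Since $\Gamma\leq\Gamma'$ with finite index, $\Gamma$ and $\Gamma'$ are commensurable. \Cref{Proposition: commensurability implies commensurator} then gives $\Comm_H(\Gamma')=\Comm_H(\Gamma)=C$.
\item Combining, $\Gamma'\leq C\leq\Gamma'$, so $\Gamma'=C$.
\end{enumerate}

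\emph{Where the SCC hypothesis is essential.} The only place it enters crucially is step 1 of the reverse direction: the maximality hypothesis only covers finite-index overgroups, so we need $[C:\Gamma]<\infty$ to apply it to $C$. Without this, $C$ could be strictly larger than $\Gamma'$, for example when $\Gamma$ is an arithmetic lattice, whose commensurator is dense. Once this is noted, the argument is routine. The hypothesis $\Gamma'\in X$ is not actually needed beyond what is stated.
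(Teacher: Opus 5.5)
Your proof is correct and follows the paper's argument. In both, the key step is that the SCC condition makes $\Comm_H(\Gamma)$ a finite-index overgroup of $\Gamma$, so maximality forces $\Comm_H(\Gamma)\leq\Gamma'$; the only difference is that for the reverse inclusion the paper notes directly that elements of a finite-index overgroup commensurate $\Gamma$ (exactly what you proved in your forward direction), whereas you pass through $\Gamma'\leq\Comm_H(\Gamma')=\Comm_H(\Gamma)$ via \Cref{Proposition: commensurability implies commensurator}, which is equally valid.
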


\begin{proof}
    Let $X,\Gamma,\Gamma'$ be as in the lemma. The first direction is immediate (and does not require $X$ to be an SCC). For the second, note first that since $X$ is an SCC we have that $\Gamma\leq \Comm_H(\Gamma)$ is of finite-index, hence by the stated property of $\Gamma'$ we have $\Comm_H(\Gamma)
    \leq \Gamma'$. For the other inclusion, note that any element of a group containing $\Gamma$ with finite index commensurates it.
\end{proof}

Given an SCC $X$, write $X_{\max}$ for the collection of commensurators in $X$ and $\tilde{X}$ for the collection of torsion-free subgroups in $X$ (note that the latter is a Borel set by Lemma 5.1 of \cite{ManifoldClassification}).

\begin{prop}
\label{Proposition: commensurator is measurable}
    Given an SCC $X$, the map $\Gamma\mapsto \Comm_G(\Gamma)$ is Borel-measurable as a function between $X$ and itself, and the set $X_{\max}\subseteq X$ is a Borel set.
\end{prop}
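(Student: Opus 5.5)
We will describe the graph of $\Gamma\mapsto\Comm_H(\Gamma)$ on $X$ in Borel terms using \Cref{Lemma: maximality of commensurators}, and then appeal to the classical fact that a function whose graph is Borel is Borel. Define
\[R=\left\{(\Gamma,\Gamma')\in X\times X\mid (\Gamma,\Gamma')\in\Inc_{<\infty}(H)\text{ and }\forall\Gamma_0\in X\,\bigl((\Gamma,\Gamma_0)\in\Inc_{<\infty}(H)\Rightarrow(\Gamma_0,\Gamma')\in\Inc\bigr)\right\}.\]
Since $X$ is closed under commensurability, every $\Gamma_0\geq\Gamma$ of finite index lies in $X$. So quantifying over $X$ is the same as quantifying over all such $\Gamma_0$, and \Cref{Lemma: maximality of commensurators} shows that $R$ is exactly the graph of $\Gamma\mapsto\Comm_H(\Gamma)$. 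This map does send $X$ into $X$, because $\Comm_H(\Gamma)$ contains $\Gamma$ with finite index (the SCC condition) and is therefore commensurable with $\Gamma$.

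The first conjunct is Borel, since $\Inc_{<\infty}(H)$ is a countable union of the Borel sets $\Inc_n(H)$. The second conjunct is a priori only co-analytic: it is the complement of the projection of the Borel set
\[\left\{(\Gamma,\Gamma',\Gamma_0)\mid (\Gamma,\Gamma_0)\in\Inc_{<\infty}(H),\ (\Gamma_0,\Gamma')\notin\Inc\right\}.\]
This is the main obstacle. The remedy is that for each fixed $\Gamma$ the section $\{\Gamma_0:(\Gamma,\Gamma_0)\in\Inc_{<\infty}(H)\}$ is countable. Indeed, every such $\Gamma_0$ is a subgroup of the finitely generated, discrete group $\Comm_H(\Gamma)$, and a finitely generated group has only countably many subgroups of any given finite index.

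To exploit this, restrict to $X\times X$ and apply \Cref{Fact: Luzin-Novikov} to the Borel set $\Inc_{<\infty}(H)\cap(X\times X)$, whose sections are countable and nonempty (each contains $\Gamma$ itself). This yields Borel functions $F_k:X\to X$ whose graphs cover it. The second conjunct then becomes $\bigcap_k\{(\Gamma,\Gamma')\mid (F_k(\Gamma),\Gamma')\in\Inc\}$, which is Borel because $\Inc$ is Borel and each $F_k$ is Borel. Hence $R$ is Borel.

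A function between standard Borel spaces with Borel (even analytic) graph is Borel, which gives the first claim. Finally,
\[X_{\max}=\{\Gamma\in X\mid \Comm_H(\Gamma)=\Gamma\}\]
is the preimage of the diagonal under the Borel map $\Gamma\mapsto(\Gamma,\Comm_H(\Gamma))$, and is therefore Borel. This identification is justified because any commensurator $\Comm_H(\Gamma_1)$ lying in $X$ is its own commensurator, by \Cref{Proposition: commensurability implies commensurator} together with the SCC finite-index condition.
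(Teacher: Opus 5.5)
Your proof is correct. It rests on the same two ingredients as the paper's proof: a Luzin--Novikov enumeration $F_k$ of the finite-index overgroups of $\Gamma$ inside $X$ (the paper also applies \Cref{Fact: Luzin-Novikov} to $\Inc_{<\infty}(H)$ over $X$), and the maximality characterization of \Cref{Lemma: maximality of commensurators}. Where you differ is in how Borelness is extracted.

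The paper builds the commensurator explicitly. It defines a nondecreasing chain $\tilde F_n(\Gamma)$, replacing the current group by $F_{n+1}(\Gamma)$ whenever the latter contains it. This chain reaches $\Comm_H(\Gamma)$ as soon as some $F_n(\Gamma)$ equals it and is constant thereafter, so the commensurator map is an eventually constant limit of Borel maps. It then obtains $X_{\max}=\Comm_H(X)$ as a Borel set via \Cref{Fact: countabe-to-one image}.

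You instead turn the universal quantifier in the maximality lemma into the countable intersection $\bigcap_k\{(\Gamma,\Gamma') : (F_k(\Gamma),\Gamma')\in\Inc\}$, conclude that the graph is Borel, and apply the Borel graph theorem. You then get $X_{\max}$ as a fixed-point set, i.e. a preimage of the diagonal.

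The paper's route is more explicit and constructive. Yours relies on a heavier black box (the Borel graph theorem) but is shorter. It also avoids two points the paper leaves implicit: that the stopping index $n_\Gamma$ in ``$\Gamma\mapsto\tilde F_{n_\Gamma}(\Gamma)$'' causes no measurability problem, and that $\Gamma\mapsto\Comm_H(\Gamma)$ is countable-to-one, as the image theorem requires. You also justify something the paper only asserts: the sections of $\Inc_{<\infty}(H)\cap(X\times X)$ are countable, since every finite-index overgroup of $\Gamma$ lies between $\Gamma$ and $\Comm_H(\Gamma)$.
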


\begin{proof}
    The set $\Inc_{<\infty}(H)$ has countable left fibers, so we apply \Cref{Fact: Luzin-Novikov} to get a sequence Borel functions $F_n:X\rightarrow X$ with $\text{Inc}_n(G)=\bigcup_{n<\infty}Graph(F_n)$. We now construct a new sequence of functions $\tilde{F}_n=:X\rightarrow X$ recursively; we set $\tilde{F}_0=F_0$ and $\tilde{F}_{n+1}(\Gamma)$ is equal to $F_{n+1}(\Gamma)$ if $\tilde{F}_n(\Gamma)\subseteq F_{n+1}(\Gamma)$, and otherwise equal to $\tilde{F}_n(\Gamma)$. We note that for any $\Gamma$ the sequence $[\Comm_G(\Gamma):\tilde{F}_n(\Gamma)]$ is non-decreasing, and moreover since $X$ is an SCC there exists some $n=n_\Gamma$ such that $F_n(\Gamma)=\Comm_G(\Gamma)$, by \Cref{Lemma: maximality of commensurators} we also have $\tilde{F}_{n_\Gamma}(\Gamma)=\Comm_G(\Gamma)$. We get that the Borel function $\Gamma\mapsto \tilde{F}_{n_\Gamma}(\Gamma)$ equals the desired function, finishing the proof of the first part. For the second part, by \Cref{Fact: countabe-to-one image} and by the second part of \Cref{Proposition: commensurability implies commensurator} we have that $\Comm_H(X)=X_{\max}$ is Borel.
\end{proof}

\begin{rem}
    One can interpret \Cref{Proposition: commensurability implies commensurator}, \Cref{Proposition: commensurator implies commensurability} and \Cref{Proposition: commensurator is measurable} together as follows: for an SCC $X$, the map $\eta:X\rightarrow X_{\max}$ defined by $\Gamma\mapsto \Comm_H(\Gamma)$ is a Borel reduction from the equivalence relation of commensurability to that of equality. Moreover, since conjugation by $h\in H$ maps the commensurator of $\Gamma\leq H$ to the commensurator of $\Gamma^h$, this map is also a reduction from the equivalence relation of commensurability of conjugates to that of conjugacy.
\end{rem}

We now deduce the following.

\begin{cor}
\label{Corollary: first reduction}
    For an SCC $X$, the map $\Gamma\mapsto \Comm_G(\Gamma)$ maps conjugate subgroups to conjugate subgroups, and the induced map from the set of conjugacy classes of elements of $X$ to those in $X_{\max}$ is countable-to-one. In particular, if the conjugacy equivalence relation on $X_{\max}$ is concretely classifiable then so is conjugacy on $X$.
\end{cor}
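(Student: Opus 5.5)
We plan to apply \Cref{Proposition: countable-to-smooth is enough} with $F$ the conjugacy relation on $X$, $E$ the conjugacy relation on $X_{\max}$, and $f=\eta:\Gamma\mapsto\Comm_H(\Gamma)$. The hypotheses fall into three groups: the Borel and orbit-relation conditions, the compatibility of $f$ with the relations, and the two countable-to-one conditions.

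\emph{Borel and orbit-relation conditions.} By \Cref{Proposition: commensurator is measurable}, $X_{\max}$ is a Borel set and $f:X\to X_{\max}$ is Borel. Both $X$ and $X_{\max}$ are $H$-invariant, since conjugation preserves commensurators. Both relations are orbit equivalence relations of the continuous conjugation action of the locally compact group $H$. Finally, $E$ is concretely classifiable by assumption.

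\emph{Compatibility.} For $h\in H$ we have $\Comm_H(\Gamma^h)=\Comm_H(\Gamma)^h$, because conjugation by $h$ preserves commensurability. Hence $F$-equivalent points have $E$-equivalent images, which proves the first assertion of the corollary. There is a small mismatch with the lemma, which asks for $f(x)=f(y)$ rather than $(f(x),f(y))\in E$. This is harmless: the proof of \Cref{Proposition: countable-to-smooth is enough} only ever uses the pushforward relation $f_*F$ and the induced map on classes. Alternatively, one can compose $f$ with the Borel map that sends each point of $X_{\max}$ to the unique element of a Borel transversal of $E$ in its class. Such a transversal exists by \Cref{Fact: smooth OER has transversal}, and the resulting map sends $F$-equivalent points to equal values.

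\emph{Countable-to-one conditions.} Pointwise, fix $\Gamma'\in X_{\max}$. Every $\Gamma$ with $\Comm_H(\Gamma)=\Gamma'$ is contained in $\Gamma'$ with finite index. A finitely generated group has only countably many finite-index subgroups, so $f$ is countable-to-one.

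On classes, suppose $\Comm_H(\Gamma)$ is conjugate to $\Gamma'$, say $\Comm_H(\Gamma)=\Gamma'^h$. Then $\Gamma^{h^{-1}}$ has commensurator equal to $\Gamma'$. So every $F$-class in the preimage of the class of $\Gamma'$ contains a finite-index subgroup of $\Gamma'$. There are only countably many such subgroups, so only countably many $F$-classes map to the class of $\Gamma'$. Thus $\hat f$ is countable-to-one, and \Cref{Proposition: countable-to-smooth is enough} gives concrete classifiability of $F$.

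The main obstacle is the mismatch between $f(x)=f(y)$ and $(f(x),f(y))\in E$. If the lemma cannot be read as only using $f_*F$, I would fall back on the composition with the transversal of $E$ described above. The countability arguments themselves are routine.
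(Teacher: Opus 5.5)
Your main line is the paper's own argument, with the parts the paper calls ``immediate'' written out. You apply \Cref{Proposition: countable-to-smooth is enough} to $f=\Comm_H(\cdot)$, take Borelness from \Cref{Proposition: commensurator is measurable}, and check equivariance and the two countability conditions. You read the lemma's hypothesis as $(f(x),f(y))\in E$, which is also how the paper implicitly uses it.

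Your fallback does not work, though. Composing $f$ with a selector for $E$ gives a map that is constant on $F$-classes. These classes are uncountable as soon as $H$ is nondiscrete, since the class of $\Gamma$ is $H/N_H(\Gamma)$ and $N_H(\Gamma)\leq\Comm_H(\Gamma)$ is discrete. So the composite is not countable-to-one, and the lemma's hypothesis fails. More generally, the literal hypotheses ``$f$ countable-to-one and $f(x)=f(y)$ whenever $(x,y)\in F$'' force $F$ to have countable classes. Hence the homomorphism reading is the only one under which the lemma can apply here at all.
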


\begin{proof}
    The first and second parts are immediate. The third part follows from \Cref{Proposition: commensurator is measurable} and from \Cref{Proposition: countable-to-smooth is enough}.
\end{proof}

\section{Eliminating torsion in small commensurability classes}

Recall the following Lemma, due to Selberg.

\begin{fct}[Selberg's lemma,\cite{Selberg}]
\label{Fact: Selberg's lemma}
    Let $\Gamma\leq GL_n(\mathbb{C})$ be finitely-generated, then $\Gamma$ contains a torsion-free subgroup of finite index.
\end{fct}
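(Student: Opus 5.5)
The plan is the classical congruence-subgroup argument: reduce the entries of $\Gamma$ modulo a suitable maximal ideal of a finitely generated ring, and show that the kernel of this reduction is torsion-free. The characteristic-zero hypothesis ($\Gamma\leq GL_n(\mathbb{C})$) will be used essentially.

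\emph{Setup.} Fix generators $\gamma_1,\dots,\gamma_r$ of $\Gamma$. Let $A\subseteq\mathbb{C}$ be the subring generated over $\mathbb{Z}$ by the entries of all $\gamma_i^{\pm1}$. Then $A$ is a finitely generated $\mathbb{Z}$-algebra and an integral domain of characteristic $0$, and $\Gamma\leq GL_n(A)$. Let $K$ be its fraction field, a finitely generated field extension of $\mathbb{Q}$.

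We will use two standard commutative-algebra facts.
\begin{enumerate}
\item[(i)] For every maximal ideal $\mathfrak m\subseteq A$ the residue field $A/\mathfrak m$ is finite.
\item[(ii)] $A$ is a Jacobson ring, so the intersection of all maximal ideals of $A$ is $0$. Equivalently, for any nonzero $c\in A$ there is a maximal ideal not containing $c$.
\end{enumerate}
By (i), for any maximal $\mathfrak m$ the reduction map $GL_n(A)\to GL_n(A/\mathfrak m)$ has finite image. Hence $\Gamma(\mathfrak m):=\Gamma\cap\ker$ has finite index in $\Gamma$. It remains to choose $\mathfrak m$ so that $\Gamma(\mathfrak m)$ is torsion-free.

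\emph{Finitely many characteristic polynomials.} Let $g\in GL_n(K)$ have finite order. Its eigenvalues are roots of unity $\zeta$ with $[K(\zeta):K]\leq n$. The algebraic closure of $\mathbb{Q}$ in $K$ is a number field $L$, so each such $\zeta$ has degree at most $n[L:\mathbb{Q}]$ over $\mathbb{Q}$. This bounds $\varphi(\mathrm{ord}\,\zeta)$, and therefore the possible orders of $\zeta$ are bounded.

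Consequently, only finitely many polynomials occur as characteristic polynomials of torsion elements of $GL_n(K)$. Since $\Gamma\leq GL_n(A)$, the characteristic polynomial of any torsion $g\in\Gamma$ has coefficients in $A$. For each such polynomial $p(x)\neq(x-1)^n$, choose one coefficient where $p(x)$ differs from $(x-1)^n$, and let $c_p\in A$ be that nonzero difference. Put $c=\prod_p c_p\neq 0$. By (ii), choose a maximal ideal $\mathfrak m$ with $c\notin\mathfrak m$.

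\emph{The kernel is torsion-free.} Let $g\in\Gamma(\mathfrak m)$ have finite order. Since $g\equiv 1\pmod{\mathfrak m}$, its characteristic polynomial is congruent to $(x-1)^n$ modulo $\mathfrak m$. By the choice of $\mathfrak m$, this forces the characteristic polynomial to equal $(x-1)^n$ exactly, so $g$ is unipotent. In characteristic $0$, a unipotent matrix of finite order is the identity: write $g=1+N$ with $N$ nilpotent. If $N\neq 0$, then $g^m=1+mN+\binom m2N^2+\cdots$ has nonzero leading term $mN^k$ in the filtration by powers of $N$, so $g^m\neq 1$. Hence $\Gamma(\mathfrak m)$ is a torsion-free subgroup of finite index.

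\emph{Main obstacle.} The delicate step is the finiteness of the possible characteristic polynomials of torsion elements. This rests on $K$ being finitely generated, so that it contains only a number field's worth of algebraic numbers, together with the Jacobson/Nullstellensatz facts (i) and (ii). I would cite these from commutative algebra rather than reprove them.

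Characteristic $0$ is genuinely needed in the final step. Over $\mathbb{F}_p(t)$, for example, the group generated by $\mathrm{diag}(t,1)$ and $\begin{pmatrix}1&1\\0&1\end{pmatrix}$ has no torsion-free subgroup of finite index.
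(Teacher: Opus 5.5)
Your proof is correct. It is the classical congruence-subgroup argument, and each step checks out:
\begin{enumerate}
\item the finite residue field makes $\Gamma(\mathfrak m)$ of finite index;
\item $c\neq 0$ because $A$ is a domain;
\item a maximal ideal avoiding $c$ exists by the Jacobson property;
\item torsion in $\Gamma(\mathfrak m)$ is forced to be unipotent, hence trivial in characteristic $0$.
\end{enumerate}
Your characteristic-$p$ example, which is $\mathbb{Z}/p\wr\mathbb{Z}$, is also right.

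The paper contains no proof to compare against. It quotes Selberg's lemma as a black box from \cite{Selberg} and uses it only to guarantee that $n_\Gamma$ is defined. Your argument therefore supplies exactly the step the paper outsources.

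Two places deserve one more line:
\begin{enumerate}
\item Passing from $[K(\zeta):K]\le n$ to $[\mathbb{Q}(\zeta):\mathbb{Q}]\le n[L:\mathbb{Q}]$ needs a reason. The roots of the minimal polynomial of $\zeta$ over $K$ are roots of unity. So its coefficients are algebraic over $\mathbb{Q}$ and lie in $K$, hence in $L$. Thus $[L(\zeta):L]\le n$.
\item The phrase ``leading term $mN^k$'' is cleanest stated as follows. If $k\ge 1$ is maximal with $N^k\neq 0$, then $N^{k-1}(g^m-1)=mN^k\neq 0$.
\end{enumerate}

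A common alternative route avoids your finiteness-of-characteristic-polynomials step entirely. Write $g=1+X$ with $X\equiv 0 \bmod \mathfrak m$. Then $g^q-1=X\bigl(q+\binom{q}{2}X+\cdots+X^{q-1}\bigr)$, and the second factor has determinant $\equiv q^n\not\equiv 0 \bmod \mathfrak m$. So $\Gamma(\mathfrak m)$ has no elements of prime order $q\neq p=\operatorname{char}A/\mathfrak m$. Intersecting two congruence kernels with distinct residue characteristics then leaves no nontrivial torsion.

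That version needs nothing about the algebraic closure of $\mathbb{Q}$ in $K$ or bounded orders of roots of unity. Yours, in exchange, produces the torsion-free subgroup as a single congruence kernel.
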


Let $X\subseteq\mathcal{S}(H)$ be a small commensurability class. We wish to use \Cref{Fact: Selberg's lemma} to construct an $H$-equivariant Borel function $\eta:X_{\max}\rightarrow \tilde{X}$ such that for every $\Gamma\in X$ we have that $\eta(\Gamma)\leq \Gamma$ is of finite index. We do so as follows.

\begin{defn}
    For $\Gamma\in X$, we write $n_\Gamma$ for the minimal $n\in\mathbb{N}$ such that $\Gamma$ contains a torsion-free subgroup of index $n$ (which exists by Selberg's lemma). We then define:
    
    \[\Gamma_m=\bigcap \{\Gamma_0\leq \Gamma\mid [\Gamma:\Gamma_0]=n_\Gamma\}\]

\end{defn}

We have the following.

\begin{prop}
\label{Lemma: Selberg equivariant Borel}
    The function $\Gamma\mapsto\Gamma_m$ satisfies the following properties:
    \begin{itemize}
        \item The subgroup $\Gamma_m$ torsion-free and of finite index in $\Gamma$.
        \item The function is Borel.
        \item The function is $H$-equivariant.
        \item The function restricts to a reduction from the equivalence relation of conjugacy on $X_{\max}$ to that of conjugacy on $\tilde{X}$.
    \end{itemize}
\end{prop}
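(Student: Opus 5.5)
We verify the four bullets of the proposition one at a time; the reduction property at the end is the only step needing real work.

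\emph{First bullet.} Since $\Gamma$ is finitely-generated, it has only finitely many subgroups of any fixed finite index. Hence $\Gamma_m$ is a finite intersection of finite-index subgroups and has finite index in $\Gamma$. By \Cref{Fact: Selberg's lemma} at least one subgroup of index $n_\Gamma$ is torsion-free, and $\Gamma_m$ is contained in it, so $\Gamma_m$ is torsion-free. Two consequences follow. Since $\Gamma_m$ is commensurable with $\Gamma$, the SCC property gives $\Gamma_m\in X$, and hence $\Gamma_m\in\tilde X$. Moreover, $\Gamma_m$ is characteristic in $\Gamma$, in particular normal.

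\emph{Third bullet (equivariance).} This is formal. Conjugation by $h\in H$ is an isomorphism $\Gamma\to\Gamma^h$. It preserves indices and torsion-freeness, so $n_{\Gamma^h}=n_\Gamma$. It also carries the family of index-$n_\Gamma$ subgroups of $\Gamma$ bijectively onto those of $\Gamma^h$. Therefore $(\Gamma^h)_m=(\Gamma_m)^h$.

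\emph{Second bullet (Borel measurability).} We combine \Cref{Fact: Luzin-Novikov} with the Borel structure on inclusions.
\begin{enumerate}
\item The relation $\Inc_n(H)$ is Borel and has finite, hence countable, fibers over the larger group. Restrict to the $\Gamma$ with at least one index-$n$ subgroup, which is a Borel set since it is a projection with countable sections. Uniformizing on this set gives Borel maps $F^n_k$ enumerating all index-$n$ subgroups of $\Gamma$.
\item The set $\{\Gamma : n_\Gamma=n\}$ is Borel. Indeed, $n_\Gamma\le n$ holds iff some $F^{n'}_k(\Gamma)$ with $n'\le n$ lies in the torsion-free Borel set (Lemma 5.1 of \cite{ManifoldClassification}), and this is a countable union of Borel conditions.
\item On each piece $\{n_\Gamma=n\}$, write $\Gamma_m$ as the intersection of the subgroups $F^n_k(\Gamma)$. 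Only finitely many distinct values occur, so the intersection can be built by iterated pairwise intersections, which are Borel by the Fact on $\cap$.
\item To make this iteration Borel, stabilize it as in the proof of \Cref{Proposition: commensurator is measurable}: set $\Gamma\mapsto\bigcap_{k\le K}F^n_k(\Gamma)$. This sequence is decreasing in $\Gamma$ and eventually constant, because there are finitely many index-$n$ subgroups. The pointwise limit of Borel maps into $\mathcal S(H)$ that eventually stabilize is Borel.
\end{enumerate}

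\emph{Fourth bullet (reduction).} Let $\Gamma_1,\Gamma_2\in X_{\max}$. If they are conjugate, equivariance shows their images are conjugate. Conversely, suppose $(\Gamma_1)_m=((\Gamma_2)_m)^h=((\Gamma_2)^h)_m$. Since $(\Gamma_i)_m$ has finite index in $\Gamma_i$, the groups $\Gamma_1$ and $\Gamma_2^h$ are both commensurable with this common subgroup, hence with each other. By \Cref{Proposition: commensurability implies commensurator} they have the same commensurator. Both lie in $X_{\max}$, and by the second part of \Cref{Proposition: commensurability implies commensurator} each equals its own commensurator. (For $\Gamma_2^h$ this uses $H$-invariance of $X_{\max}$, since conjugation maps commensurators to commensurators.) Hence $\Gamma_1=\Comm_H(\Gamma_1)=\Comm_H(\Gamma_2^h)=\Gamma_2^h$.

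The main obstacle is the Borel measurability in the second bullet. One must check that the stabilized-intersection construction, and the case split by the value of $n_\Gamma$, really produce a Borel map. The other bullets are essentially formal.
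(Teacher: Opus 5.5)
Your proposal is correct and follows the paper's proof essentially bullet by bullet: Selberg's lemma plus finiteness of index-$n$ subgroups for the first bullet; Luzin--Novikov enumeration, Borelness of $\Gamma\mapsto n_\Gamma$ via the torsion-free Borel set, and iterated finite intersections for the second; preservation of indices under conjugation for the third; and the commensurability-implies-equal-commensurators argument for the fourth. For the fourth bullet you work directly with $\Comm_H(\Lambda)=\Lambda$ for $\Lambda\in X_{\max}$, rather than writing $\Lambda_i=\Comm_H(\Gamma_i)$ as the paper does, and your Borel step is, if anything, slightly more careful than the written one: you enumerate all index-$n_\Gamma$ subgroups, as the definition of $\Gamma_m$ requires (the paper's relation $R$ as written ranges only over $\tilde X$), and you explicitly check that $\Gamma_m\in\tilde X$ using closure of $X$ under commensurability.
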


\begin{proof}
    \begin{itemize}
        \item The first part follows from the definition of $n_\Gamma$, and the second from the fact that a finitely-generated group contains only finitely many subgroups of a given index, hence their intersection is of finite index in $\Gamma$.
        \item We first prove that the assignment $\Gamma\mapsto n_\Gamma$ is Borel, which is equivalent to proving that the collection of subgroups whose minimal index of a torsion-free subgroup is at most $n$ is Borel. Write $J_n$ for this collection of subgroups. We then have the following:
        
        \[J_n=\pi_1\left(X\times X'\cap \text{Inc}_n(H)\right)\]

        Where $\pi_1$ is the projection onto the first coordinate. This projection is finite-to-one, and by \Cref{Fact: countabe-to-one image} we get that $J_n$ is Borel.
        
        Observe $R=\text{Inc}_n(H)\cap J_n\times \tilde{X}$. The relation $R$ has finite right-fibers, hence by \Cref{Fact: Luzin-Novikov}  we can write $R^=\bigcup _{k<\infty} Graph(F_k)$ for some Borel functions $F_k:J_n\rightarrow X'$. Consider now the Borel maps $\tilde{F}_k:J_n\rightarrow [\tilde{X}]^{<\infty}$ which are defined recursively by $\tilde{F}_0=F_0$ and $\tilde{F}_{k+1}(\Gamma)=\tilde{F}_k(\Gamma)\smallfrown(F_{k+1}(\Gamma))$ if $F_{k+1}(\Gamma)\neq F_i(\Gamma)$ for $i<k+1$, and otherwise $\tilde{F}_{k+1}(\Gamma)=\tilde{F}_k(\Gamma)$ (that is, we list the subgroups of index $n$ in $\Gamma$ according to the ordering given by the maps $\tilde{F}_i$, without repetition). We finally set $F:X'\rightarrow [X']^{<\infty}$ by $F(\Gamma)=F_k(\Gamma)$, where $k$ is minimal such that $F_k(\Gamma)=F_{N}(\Gamma)$ for all $N>k$. Note that since $[X']^{<\infty}=\bigcup _{k<\infty} (X')^k$ and since the map $\cap:[X']^{<\infty}\rightarrow X'$ is Borel by a simple induction argument, the map $S=\cap\circ F$ is Borel.
        \item This follows from the fact that for any $g\in H$, $[\Gamma:\Gamma']=k$ if and only if $[\Gamma^g:\left(\Gamma'\right)^g]=k$.
        \item Suppose $\Gamma_1,\Gamma_2\in X'$ and write $\Lambda_i=\Comm_H(\Gamma_i)$. If $\Lambda^g_2=\Lambda_2$ for some $g\in H$ then by the above $(\Lambda_2)^g_m=(\Lambda_1)_m$. For the other direction, suppose $(\Lambda_2)^g_m=(\Lambda_1)_m$ for some $g\in H$. Since $\Lambda_1,\Lambda_2$ are finitely-generated we have that $\Lambda_i$ is commensurable to $(\Lambda_i)_m$, and by the same theorem and transitivity we have that $\Gamma_i$ is commensurable to $(\Lambda_i)_m$, for $1\leq i \leq 2$. Since commensurability is preserved under conjugation, we get that $\Gamma_1$ is commensurable to $\Gamma^g_2$, hence $\Lambda_1=\Comm_H(\Gamma_1)=\Comm_H(\Gamma^g_2)$ by \Cref{Proposition: commensurability implies commensurator}. But $\Comm_H(\Gamma^g_2)=\Comm_H(\Gamma_2)^g=\Lambda^g_2$, finishing the proof. 
    \end{itemize}
\end{proof}

We get the following immediate corollary from \Cref{Lemma: Selberg equivariant Borel} and \Cref{Corollary: first reduction}.

\begin{cor}
\label{Corollary: torsion eliminated in SCC}
    For an SCC $X\subseteq \mathcal{S}_{\fg}(H)$ such that conjugacy on $\tilde{X}$ is concretely classifiable we have that conjugacy on $X$ is concretely classifiable. 
\end{cor}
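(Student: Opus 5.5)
The plan is to chain the two reductions already established. By \Cref{Corollary: first reduction}, it suffices to show that conjugacy on $X_{\max}$ is concretely classifiable. Note that $X_{\max}$ is a Borel, $H$-invariant set by \Cref{Proposition: commensurator is measurable}, and it is invariant because $\Comm_H(\Gamma^h)=\Comm_H(\Gamma)^h$. So conjugacy restricted to it is again an orbit equivalence relation of a Borel action of the locally compact group $H$.

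The next step is to reduce conjugacy on $X_{\max}$ to conjugacy on $\tilde{X}$. For this I will use the map $\Lambda\mapsto\Lambda_m$ of \Cref{Lemma: Selberg equivariant Borel}. By the first item there, $\Lambda_m$ is torsion-free and of finite index in $\Lambda$. Since $X$ is closed under commensurability, $\Lambda_m\in X$, and hence $\Lambda_m\in\tilde{X}$. The map is Borel by the second item, and by the fourth item it is a Borel reduction from conjugacy on $X_{\max}$ to conjugacy on $\tilde{X}$.

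By hypothesis there is a Borel $c:\tilde{X}\to Y$ into a standard Borel space reducing conjugacy on $\tilde{X}$ to equality. The composite $\Lambda\mapsto c(\Lambda_m)$ is then a Borel reduction of conjugacy on $X_{\max}$ to $=_Y$. Hence conjugacy on $X_{\max}$ is concretely classifiable, and \Cref{Corollary: first reduction} transfers this to conjugacy on $X$.

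Concretely, \Cref{Corollary: first reduction} rests on \Cref{Proposition: countable-to-smooth is enough} applied to $f=\Comm_H$ and $E$ the conjugacy relation on $X_{\max}$. One hypothesis there is that $f$ is countable-to-one. This holds because a finitely-generated group has only countably many finite-index subgroups, and each $\Gamma\in X$ sits with finite index in $\Comm_H(\Gamma)$. Another hypothesis is that the induced map on conjugacy classes is countable-to-one. Here I will use that for a fixed $\Lambda\in X_{\max}$, any $\Gamma$ with $\Comm_H(\Gamma)$ conjugate to $\Lambda$ is conjugate to a finite-index subgroup of $\Lambda$. The main point needing care is therefore not new mathematics but checking that every hypothesis of the cited results holds on the restricted spaces. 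In particular, $\Lambda_m$ must land in $\tilde{X}$, which uses the commensurability-closure clause in the definition of an SCC.
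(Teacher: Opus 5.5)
Your proposal is correct and follows the paper's route. The paper obtains this corollary immediately by composing the Borel reduction $\Lambda\mapsto\Lambda_m$ (conjugacy on $X_{\max}$ to conjugacy on $\tilde{X}$, \Cref{Lemma: Selberg equivariant Borel}) with \Cref{Corollary: first reduction}; your additional checks make explicit what the paper leaves implicit, namely that $\Lambda_m\in\tilde{X}$ by commensurability-closure and that the countable-to-one hypotheses of \Cref{Proposition: countable-to-smooth is enough} hold.
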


\subsection*{The case of Fuchsian groups}

We now focus on $G=\PSL_2(\R)$. We denote by $X_{\text{e}},X_{\text{l}}\subseteq\mathcal{S}_{fg}(G)$ for the collections of elementary Fuchsian groups and lattices, respectively (recall that a Fuchsian group is called elementary if it is virtually cyclic, or equivalently, if its limit set is finite, see \cite{DalBo}). As before, we write $\mathcal{S}_{\fg}(G)'$ for the collection of nonelementary, finitely-generated, nonlattice Fuchsian groups. Note that since the assignment of a limit set to a Fuchsian group is Borel (see Theorem 8.20 in \cite{ManifoldClassification}), and since a finitely-generated Fuchsian group is a lattice if and only if it has full limit set, the sets $X_{\text{e}},X_{\text{l}}$ are Borel. 

\begin{rem}
\label{Remark: elemntary classif}
    Constructing a Borel transversal for conjugacy on $X_{\text{e}}$ is can be done directly. We follow Theorem 2.7.5 in \cite{FuchsianLectures}, and note that among the representatives for conjugacy presented there, the only pairwise conjugate ones are the infinite cyclic groups which are generated by unipotent elements.
\end{rem}

We can thus prove the main theorem.

\begin{proof}[Proof of \Cref{Theorem: Main Theorem}]
    Consider the following conjugacy-invariant Borel decomposition $\mathcal{S}_{\fg}(\PSL_2(\R))=\mathcal{S}_{\fg}(\PSL_2(\R))'\sqcup X_{\text{l}}\sqcup X_{\text{e}}$. It is sufficient to prove that conjugacy is concretely classifiable on each of the parts. The first component is an SCC by \Cref{Example: Fuchsian} and so conjugacy is concretely classifiable on it by \Cref{Fact: no torsion smooth} and \Cref{Corollary: torsion eliminated in SCC}. By Corollary 3.2 in \cite{StuckZimmer} we have concrete classification on the second component, and the third is concretely classifiable by \Cref{Remark: elemntary classif}. 
\end{proof}

\begin{rem}
    We note that the above proof ``by parts" is not artificial; the argument we have provided for nonelementary groups does not work for elementary groups, as commensurability of conjugates on the space of elementary groups admits a reduction from the orbit equivalence relation associated to the natural action $\mathbb{Q}_{>0}^{\times}\curvearrowright \mathbb{R}_{>0}^\times$, whose orbit equivalence relation is not concretely classifiable by Vitali's argument. This reduction is given as follows:
    
    \[r\mapsto \left\langle\begin{pmatrix}
        e^{\frac{r}{2}} & 0\\
        0 & e^{-\frac{r}{2}}
    \end{pmatrix}\right\rangle\]
\end{rem}

We conclude with the following natural question.

\begin{que}
    Is the action of $\PSL_2(\R)\times \PSL_2(\R)$ on the space of its finitely-generated discrete subgroups concretely classifiable?
\end{que}

\section{Quasi-invariant measures}

We write $X'$ for the collection of nonelementary, nonlattice Fuchsian groups, and write $G=\PSL_2(\R)$.

We first prove a direct corollary of concrete classification. This argument is attributed to Glimm-Effros.

\begin{cor}
\label{Corollary: QIRS}
    For a probability measure $\nu$ on $\mathcal{S}_{\fg}(G)$ which is quasi-invariant and ergodic for the action of $G$, there exists some $\Gamma\in\mathcal{S}_{\fg}(G)$ with $\nu(\Gamma^G)=1$.
\end{cor}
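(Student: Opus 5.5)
By \Cref{Theorem: Main Theorem}, there is a standard Borel space $Z$ and a Borel map $\phi:\mathcal{S}_{\fg}(G)\rightarrow Z$ such that $\phi(\Gamma_1)=\phi(\Gamma_2)$ if and only if $\Gamma_1,\Gamma_2$ are conjugate. The plan is the standard Glimm--Effros argument: push $\nu$ forward to $Z$ and show that $\phi_*\nu$ is a Dirac mass. The preimage of that point is then a single conjugacy class of full measure.

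First, every $\phi$-preimage of a Borel set $B\subseteq Z$ is a $G$-invariant Borel subset of $\mathcal{S}_{\fg}(G)$. By ergodicity of $\nu$ (quasi-invariance together with ergodicity means invariant Borel sets are null or conull), $\phi_*\nu(B)\in\{0,1\}$ for every Borel $B\subseteq Z$. Thus $\phi_*\nu$ is a $\{0,1\}$-valued Borel probability measure on a standard Borel space.

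Next, I would show that such a measure is a point mass. Fix a countable family $(B_k)_{k<\infty}$ of Borel sets separating points of $Z$; for instance, take a countable basis after identifying $Z$ with a Polish space. For each $k$, let $C_k$ be whichever of $B_k$ or $Z\setminus B_k$ has measure $1$. The intersection $C=\bigcap_k C_k$ has measure $1$, as a countable intersection of conull sets. Hence $C$ is nonempty, and since the $B_k$ separate points it contains at most one point, so $C=\{z\}$.

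Finally, $\phi^{-1}(z)$ is a single conjugacy class $\Gamma^G$ for any $\Gamma$ with $\phi(\Gamma)=z$. Such a $\Gamma$ exists because the preimage has measure $1$, and therefore $\nu(\Gamma^G)=1$. This also shows that $\Gamma^G$ is Borel. The only subtle point is the measurability bookkeeping in this last step, which is handled because $\phi$ is a Borel reduction to equality. I do not expect a genuine obstacle beyond invoking the main theorem.
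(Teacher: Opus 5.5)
Your proposal is correct and is the same Glimm--Effros argument as the paper's. The paper pushes $\nu$ forward along the quotient map to the standard Borel space $\mathcal{S}_{\fg}(G)/G$ and notes that this $G$-invariant map is $\nu$-almost surely constant; your separating-family argument for the reduction $\phi$ supplies exactly that step in detail.
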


\begin{proof}
    Let $\pi:\mathcal{S}_{\fg}(G)\rightarrow \mathcal{S}_{\fg}(G)/G$ denote the quotient map. Since the latter is standard Borel and $\pi$ is $G$-invariant, it must be constant $\nu$-almost surely, hence $\nu$ is supported on the $G$-orbit of its almost-sure value $\Gamma$.
\end{proof}

The following proof is carried out as in Lemma 3.5 of \cite{StuckZimmer}, with some additional details provided.

\begin{proof}[Proof of \Cref{Theorem: Stuck-Zimmer}]
    Without loss of generality we may assume that every stabilizer is in $X'$. Let $\nu^*$ be the pushforward measure on $X'$. By \Cref{Corollary: QIRS}, it is supported on a single orbit $\Gamma^G$.

    Let $\left\{\nu_{\Gamma^g}:g\in G\right\}$ denote a disintegration of $\nu$ along $S$. By uniqueness, equivariance of the stabilizer map and quasi-invariance of $\nu$, for every $g\in G$ and $\nu^*$-almost every $\Gamma^h\in \Gamma^G$ we have $g^*\nu_{\Gamma^h}\sim\nu_{\Gamma^{gh}}$. Observe that for every $g\in G$, the group $N_G(\Gamma^g)$ preserves the fiber $S^{-1}(\Gamma^g)$, hence the probability space $(S^{-1}(\Gamma^g),\nu_{\Gamma^g})$ is a nonsingular $N_G(\Gamma)/\Gamma$-system. We claim that it is ergodic for every $g\in G$. Indeed, if $f_{\Gamma^g}\in L^1(\nu_{\Gamma^g})$ is $N_G(\Gamma^g)/\Gamma^g$-invariant, then for every $h\in G$ we can define $f_{\Gamma^{hg}}\in L^1(\nu_{\Gamma^{hg}})$ bu $f_{\Gamma^{hg}}=f_{\Gamma^g}\circ h^{-1}$, and note that by the $N_G(\Gamma)$-invariance of $f_{\Gamma^g}$ it is well-defined. We now define $f\in L^1(\mu)$ by setting $f$ to restrict to each $S^{-1}(\Gamma^g)$ as $f_{\Gamma^g}$. This function is $G$-invariant, hence constant, thus so is $f_{\Gamma^g}$ and the system is indeed ergodic.
    
    Note that $N_G(\Gamma)/\Gamma$ is a finite group (this follows from \Cref{Fact: Discrete+finite index commensurators} for example), hence ergodicity implies that $S^{-1}(\Gamma^g)$ is a transitive $N_G(\Gamma^g)/\Gamma^g$-set, up to $\nu_{\Gamma^g}$-measure $0$. Moreover, this action is free, since the underlying space is $S^{-1}(\Gamma^g)$. 
    
    Fixing $y\in\supp(\nu_\Gamma)$, the above argument demonstrates that the map $\eta:G/\Gamma\rightarrow Y$ given by $\eta(g\Gamma)\rightarrow g.y$ is well-defined, $G$-equivariant, injective and with $\nu$-conull image. By uniqueness of the $G$-invariant measure class $[m]$ we have $(\eta^{-1})^*\nu\sim m$.
\end{proof}

\bibliographystyle{acm}
\bibliography{Bibliography}

\end{document}